\pgfplotsset{compat=1.11}
\tikzset{help lines/.style=very thin}
\tikzset{Grid/.style={help lines,color=gray!20}}
      \let\T@ref@orig\T@ref
      \def\T@ref#1{\T@ref@orig{#1}\wrtusdrf{#1}}%
      \let\@refstar@orig\@refstar
      \def\@refstar#1{\@refstar@orig{#1}\wrtusdrf{#1}}%
      \DeclareRobustCommand\ref{\@ifstar\@refstar\T@ref}%
\title[Degenerate fully nonlinear nonlocal MFG]%
{A strongly degenerate fully nonlinear mean field game with nonlocal diffusion}
\author{Indranil Chowdhury}
\address{\parbox{.8\linewidth}{{\normalfont\textbf{I.~Chowdhury}}\medskip\\
Indian Institute of Technology - Kanpur,\\
Department of Mathematics and Statistics, \\
Kalyanpur, Kanpur - 208016, India\medskip}}
\email{indranil@iitk.ac.in}
\author{Espen R.~Jakobsen}
\address{\parbox{.8\linewidth}{{\normalfont\textbf{E.~R.~Jakobsen}}\medskip\\
Institutt for matematiske fag, NTNU,\\
7491 Trondheim, Norway\medskip}}
\email{espen.jakobsen@ntnu.no}
\author{Mi\l{}osz Krupski}
\address{\parbox{.8\linewidth}{{\normalfont\textbf{M.~Krupski}}\medskip\\
Instytut Matematyczny, Uniwersytet Wroc\l{}awski,\\
pl.~Grunwaldzki 2/4, 50-384 Wroc\l{}aw, Poland\medskip\\
Prirodoslovno--matemati\v{c}ki fakultet, Sveu\v{c}ili\v{s}te u Zagrebu,\\
Horvatovac 102a, 10000 Zagreb, Croatia\medskip}}
\email{milosz.krupski@uwr.edu.pl}
\date{\today}
\subjclass[2020]{35A01, 35A02, 35D30, 35D40, 35K55, 35K65, 35Q84, 35Q89, 35R11, 47D07, 49L, 49N80, 60G51}	
\keywords{Mean field games, Fokker--Planck--Kolmogorov equation, Hamilton--Jacobi--Bellman equation,
fully-nonlinear PDEs, degenerate PDEs, nonlocal PDEs, 
L\'evy processes,
controlled diffusion, existence, uniqueness}
\let\OLDenum\enumerate
\renewcommand\enumerate{\vspace{0.25\baselineskip}\OLDenum\setlength{\itemsep}{0.5\baselineskip}}
\newenvironment{description*}%
  {\vspace{0.25\baselineskip}\begin{description}
    \setlength{\itemsep}{0.5\baselineskip}
    \setlength{\mathindent}{1.5\leftmargin}
  }
  {\end{description}}
\let\orgdescriptionlabel\descriptionlabel
\renewcommand*{\descriptionlabel}[1]{%
 \let\orglabel\label
 \let\label\@gobble
 \phantomsection
 \edef\@currentlabel{#1}%
 \let\label\orglabel
 \orgdescriptionlabel{#1}%
}
\newtheorem{theorem}{Theorem}[section]
\newaliascnt{lemma}{theorem}\newtheorem{lemma}[lemma]{Lemma}\aliascntresetthe{lemma}
\newaliascnt{corollary}{theorem}\newtheorem{corollary}[corollary]{Corollary}\aliascntresetthe{corollary}
\newaliascnt{proposition}{theorem}\newtheorem{proposition}[proposition]{Proposition}\aliascntresetthe{proposition}
\newaliascnt{conjecture}{theorem}\aliascntresetthe{conjecture}
\theoremstyle{remark}
\newaliascnt{remark}{theorem}\newtheorem{remark}[remark]{Remark}\aliascntresetthe{remark}
\theoremstyle{definition}
\newaliascnt{definition}{theorem}\newtheorem{definition}[definition]{Definition}\aliascntresetthe{definition}
\def\equationautorefname~#1\null{problem~\upshape{(#1)}\null}
\def\partnautorefname~#1\null{Part\,(#1)\null}
\def\itemautorefname~#1\null{\,(#1)\null}
\newcounter{step}[theorem]
\newcounter{partn}[theorem]
\renewcommand{\thepartn}{\textit{\roman{partn}}}
\newcommand{\bulletdiam}{\raisebox{\dimexpr.5\fontcharht\font`S-.5\height}{$\diamond$}\ }
\newenvironment{proof*}{\begin{proof}\setcounter{step}{0}}{\end{proof}}
\newenvironment{step}{\refstepcounter{step}\bulletdiam
{\textit{Step \thestep}.}}
{\smallskip\par}
\newenvironment{step*}[1]{\refstepcounter{step}\bulletdiam {\textit{Step \thestep. #1}.}}
{\smallskip\par}
\newenvironment{part*}
{\refstepcounter{partn}\bulletdiam{\textit{Part {\rm (}\thepartn\,{\rm )}}.}}
{\smallskip\par}
\newcounter{parn}[remark]
\renewcommand{\theparn}{\textit{\alph{parn}}}
\newcommand*{\npar}{\refstepcounter{parn}\ifnum\value{parn}=1{\par({\theparn})\ }\else{\smallskip\par ({\theparn})\ }\fi}
\def\parnautorefname~#1\null{\,({#1})\null}
\DeclareMathOperator{\pv}{p.\!v.\!}
\newcommand{\dt} {\partial_t}
\newcommand{\KHJ}{K_{\text{\textit{HJB}}}}
\newcommand{\R} {\mathbb{R}}
\newcommand{\N} {\mathbb{N}}
\newcommand{\X} {{\R^d}}
\newcommand{\fL}{\mathcal{L}}
\newcommand{\fLf}{(-\Delta)^{\sigma}}
\newcommand{\fLs}{\fL^*}
\newcommand{\PX}{\mathcal{P}(\X)}
\newcommand{\Mb}{\mathcal{M}_b(\X)}
\newcommand{\T}{\mathcal{T}}
\newcommand{\Tb}{\overline{\T}}
\newcommand{\rtc}{\theta}
\newcommand{\bstr}{\omega}
\newcommand{\hd}{\alpha}
\newcommand{\hdd}{p}
\newcommand{\hb}{\beta} 
\newcommand{\Holder}[1]{\mathcal{C}^{#1}}
\newcommand{\Hb}[1]{\Holder{#1}_b(\X)}
\newcommand{\CPX}{C(\Tb,\PX)}
\newcommand{\LC}[1]{B(\T, \Hb{#1})}
\newcommand{\Cb}{C_b(\T\times\X)}
\newcommand{\Cbb}{C_b(\Tb\times\X)}
\newcommand{\Cu}{C(\T\times\X)}
\newcommand{\UC}{\text{\textit{UC}}}
\newcommand{\cF}{\mathfrak{f}}
\newcommand{\cG}{\mathfrak{g}}
\newcommand{\cD}{\mathcal{R}}
\newcommand{\uT}{g}
\newcommand{\unT}[1]{g_{#1}}
\renewcommand{\epsilon}{\varepsilon}
\newcommand{\fLlow}{\fL_r}
\newcommand{\fLhigh}{\fL^r}
\newcommand{\CaPX}[1]{\Holder{#1}(\Tb,\PX)}
\newcommand{\HJcD}{\mathcal{S}_{\text{\textit{HJB}}}}
\def\@tocline#1#2#3#4#5#6#7{\relax
  \ifnum #1>\c@tocdepth 
  \else
    \par \addpenalty\@secpenalty\addvspace{#2}%
    \begingroup \hyphenpenalty\@M
    \@ifempty{#4}{%
      \@tempdima\csname r@tocindent\number#1\endcsname\relax
    }{%
      \@tempdima#4\relax
    }%
    \parindent\z@ \leftskip#3\relax \advance\leftskip\@tempdima\relax
    \rightskip\@pnumwidth plus4em \parfillskip-\@pnumwidth
    #5\leavevmode\hskip-\@tempdima
      \ifcase #1
       \or\or \hskip 1em \or \hskip 2em \else \hskip 3em \fi%
      #6\nobreak\relax
    \hfill\hbox to\@pnumwidth{\@tocpagenum{#7}}\par
    \nobreak
    \endgroup
  \fi}
\begin{document}
\begin{abstract}
There are few results on mean field game (MFG) systems
where the PDEs are either fully nonlinear or have degenerate diffusions.
This paper introduces a problem that combines both difficulties.
We prove existence and uniqueness for a strongly degenerate, fully nonlinear MFG system
by using the well-posedness theory for fully nonlinear MFGs established in our previous paper \cite{CJK}. It is the first such application in a degenerate setting.
Our MFG involves a controlled pure jump (nonlocal) L\'evy diffusion of order less than one, and monotone, smoothing couplings.
The key difficulty is obtaining uniqueness for the corresponding degenerate, non-smooth Fokker--Plank equation: 
since the regularity of the coefficient and the order of the diffusion are interdependent, it
holds when the order is sufficiently low. Viscosity solutions and a non-standard doubling of variables argument are used along with a bootstrapping procedure. 
\end{abstract}

\maketitle


\section{Introduction}
In this paper we show well-posedness of a strongly degenerate fully nonlinear mean field game (MFG) system, a degenerate member of the family of fully nonlinear MFGs 
introduced in \cite{CJK}, 
\begin{align}\label{eq:mfg}
  \left\{ 
  \begin{aligned} 
   &-\dt u - F(\fL u) =\cF(m)\quad&&\text{on $\T\times\X$},\\
   &\qquad u(T)=\cG(m(T))\quad&&\text{on $\X$},\\[0.2cm]
   &\dt m - \fLs(F'(\fL u)\,m)=0\quad&&\text{on $\T\times\X$},\\
   &\qquad m(0)=m_0\quad&&\text{on $\X$},
  \end{aligned}
  \right.
 \end{align}
 where $\T=(0,T)$ for a fixed $T>0$, $F'\geq 0$, and
 $\fL$ is a purely nonlocal L\'evy operator of order less than one, i.e.
    \begin{align}\label{defL}
     \fL \phi(x) = \int_\X \big(\phi(x+z)-\phi(x)\big)\,\nu(dz),
   \end{align}
   for some L\'evy measure $\nu$ (see \autoref{def:levy}). This problem is fully nonlinear and can degenerate in two ways: when $F'=0$ or when $\fL$ degenerates (no diffusion in some directions). 
 
MFGs are limits of $N$-player stochastic games as $N\to\infty$ under some symmetry and weak interaction conditions on the players. 
 Nash equilibria are characterized by a coupled system of PDEs---the MFG system, here \autoref{eq:mfg}---consisting of a backward  Hamilton--Jacobi--Bellman (HJB) equation for the value function $u$ of the generic player and a forward Fokker--Planck (FP) equation for the distribution $m$ of players. 
 A rigorous mathematical theory of such problems started with the work of Lasry--Lions~\cite{MR2269875,MR2271747,MR2295621}
 and Huang--Caines--Malham\'e~\cite{MR2346927,MR2344101} in 2006,
 and today this is a large and rapidly expanding field, mostly focused on the so-called PDE  and probabilisitc approaches.
 Extensive background and recent developments can be found in e.g.~\cite{MR4214773,MR3134900,MR3752669,MR3753660,MR3559742,MR3967062,
 MR2762362}
 and the references therein.

In contrast to the more classical setting,
 in \cite{CJK} we introduced a class of MFGs including \autoref{eq:mfg}, where not only the drift  is controlled, but also the diffusion.
 To be more precise, the players control the time change rate of their individual (L\'evy) noises, which for self-similar noise processes like the Brownian motion or the $\alpha$-stable process is equivalent to a classical controlled
 diffusion \cite{MR2179357,MR2322248}. See \cite[Section 3]{CJK} for more details and a semi-heuristic derivation of \autoref{eq:mfg}. 
Controlled diffusions \cite{MR2179357} are key ingredients of portfolio optimization in finance~\cite{MR2178045, MR2380957, MR2533355}. Despite many applications of MFGs e.g. in economics \cite{MR2762362,MR3363751}, see also \cite{MR0172689}, controlled diffusions is a rare and novel subject in the context of MFGs, so far mostly addressed by probabilistic methods:  \cite{MR3332857,BT22} shows existence 
for 
local MFG systems with and without common noise, MFGs of controls are considered in \cite{djete2023mean}, and~\cite{MR4158808,MR3980873} consider problems perturbed by bounded nonlocal operators.
Some results by PDE methods can be found in \cite{Ricciardi}, as well as \cite{MR4361908, MR4702626}
for uniformly elliptic (stationary second order) problems. 

In \cite{CJK} we developed an abstract existence and uniqueness theory for \autoref{eq:mfg} and extensions involving also controlled drift, and applied it to prove well-posedness of non-degenerate MFGs with local or nonlocal diffusion, i.e.~involving $F'\geq\kappa>0$ and a non-degenerate L\'evy operator $\fL$. 
Typical examples are the Laplacian $\fL=\Delta$ and the fractional Laplacian $\fL=-(-\Delta)^\alpha$ for $\alpha\in(0,2)$. 

 The main objective of this paper is to verify the necessary conditions for well-posedness established in \cite{CJK}
 in the first known example of a degenerate MFG with controlled diffusion, namely the nonlocal MFG system \eqref{eq:mfg}. Nonlocal MFGs have been studied in \cite{MR3912635,MR3934106,MR4309434,MR4223351} in the case of non-degenerate and uncontrolled diffusion, while for degenerate diffusions we only know of the results of~\cite{MR3399179} where uncontrolled diffusions are considered.  
  
 Observe that
 with $(f,\uT) = \big(\cF(m),\cG(m(T))\big)$, the first pair of equations in \autoref{eq:mfg}
 form a terminal value problem for a fully nonlinear HJB equation,
 \begin{align}\label{eq:hjb}
 \left\{
  \begin{aligned}
   &-\dt u - F(\fL u) = f\quad&&\text{on $\T\times\X$},\\
   &\qquad u(T)= \uT\quad&&\text{on $\X$}.
  \end{aligned}
  \right.
 \end{align}
 In this case the viscosity solution framework applies, but we consider classical solutions so that $\fL u$  
 is well-defined pointwise. Since we cannot expect any regularizing effect, we rely on the comparison principle to transfer the regularity of $f$ and $\uT$ onto the
 solution $u$. With $b=F'(\fL u)$
 the second pair of equations in \autoref{eq:mfg}
 form an initial value problem for a FP equation,
 \begin{align}\label{eq:fp}
 \left\{
  \begin{aligned}
  & \dt m - \fLs(bm)=0\quad&&\text{on $\T\times\X$},\\ 
   &\qquad m(0)=m_0\quad&&\text{on $\X$}.
  \end{aligned}
  \right.
 \end{align}
  Since $b$ need not be very regular and may even degenerate, we consider very weak (measure-valued) solutions of \autoref{eq:fp}.

Uniqueness for such FP equations were not previously known and its proof constitutes a large portion of our work. We use a Holmgren-type
argument,
 which requires the construction of a suitable test function 
solving a strongly degenerate dual equation with a coefficient of low regularity. This construction
relies on viscosity solutions theory, non-standard doubling of variables arguments, and bootstrapping to get optimal results. Because the regularity of the coefficient decreases with increasing order of $\fL$, our argument only works when the order of the operator is low enough (see \autoref{rem:gamma}).

The paper is structured as follows. \autoref{section:main} contains the main results of the paper: existence and uniqueness for a degenerate model of mean field games with controlled diffusion and uniqueness of solutions of the associated FP equation.
In \autoref{section:stochastics} we recall the model of controlling the diffusion (time change rate of the L\'evy process) and show how different control problems translate into various types of HJB equations; then we give a natural example when the MFG system \eqref{eq:mfg} may be degenerate. \autoref{section:preliminaries} contains preliminary results needed in the paper, regarding L\'evy operators and viscosity solutions.
In \autoref{section:hjb-fp} we first recall the general well-posedness theory of \cite{CJK}. To conclude (by applying these results), suitable 
properties of solutions are then established separately for HJB and FP equations.
Finally, the fundamental results on existence and regularity of solutions of the dual equation, leading to uniqueness of solutions of the FP equation, are presented in \autoref{section:dual}.

\section{Main results}\label{section:main}
  By $B_r$ and $B_r(x)$ we denote the balls centred at $0$ and $x$. Let $\PX$ consist of probability measures on $\X$, a~subspace of
 the space of bounded Radon measures $\Mb=C_0(\X)^*$.
  We equip $\PX$ with the topology of weak (narrow, vague) convergence of measures (see \cite{CJK} for details).
 \begin{definition}\label{def:levy}
 A non-negative Radon measure on $\X$ satisfying
 \begin{equation*}
 \nu\big(\{0\}\big)=0,\qquad\int_\X \big(1\wedge |z|\big)\,\nu(dz)<\infty
 \end{equation*}
 is called a L\'evy measure (of order less than one).
 A L\'evy measure $\nu$ is symmetric at the origin if 
   $\nu(A)=\nu(-A)$ for every $A\subset B_1$.
 \end{definition}
 \begin{definition}\label{def:holder}
  A function $\phi$ is H\"older-continuous at $x\in \X$ with parameter $\hd\in(0,1]$ if
  for some $r>0$
  \begin{align}\label{eq:holder}
   [\phi]_{\Holder{\hd}(B_r(x))} = \sup_{y\in B_r(x)\setminus\{x\}} \frac{|\phi(x)-\phi(y)|}{|x-y|^\hd}<\infty.
  \end{align}
  The space $\Holder{\hd}(\X)$ consists of functions which are H\"older-continuous
  at every point in $\X$ with parameter $\hd$.
  Further, define $\Hb{\hd} = \{\phi:\|\phi\|_{\hd}<\infty\}$, where
  \begin{align*}
      [\phi]_{\hd} = \sup_{x\in \X}\,[\phi]_{\Holder{\hd}(B_1(x))} 
      \quad \text{and}\quad \|\phi\|_{\hd} = \|\phi\|_{L^\infty(\X)} + [\phi]_{\hd}.
  \end{align*}
  \end{definition}
  Note that the definition of $\Hb{\hd}$ is equivalent 
 to the more standard notation, where the supremum in \eqref{eq:holder} is taken over $|x-y|\in\X\setminus\{0\}$.
 The space $\Hb{1}$ consists of bounded, Lipschitz-continuous functions.
 By $C^1(\X)$, $C^2(\X)$ we denote spaces of once or twice continuously differentiable functions.
 
  \begin{definition}\label{def:bounded}
   When $X$ is a normed space, $B(\T,X)$ denotes the space of bound\-ed functions from $\T$ to $X$, i.e.~$B(\T,X) = \big\{u:\T\to X\ :\ \textstyle\sup_{t\in\T}\|u(t)\|_X<\infty\big\}$.
  \end{definition} 
In \autoref{eq:mfg}, we use the following assumptions:
 \begin{description*}
  \item[(L)\label{L:deg}]   $\fL$ is given by \eqref{defL} for $2\sigma\in(0,1)$ and L\'evy measure $\nu$ satisfying
 \begin{align*}
      \int_{B_1} \Big(1\wedge\frac{|z|^\hdd}{r^\hdd}\Big)\,\nu(dz) 
      \leq \frac{K}{\hdd-2\sigma}r^{-2\sigma}
 \end{align*}
 for a constant $K\geq 0$ and every $\hdd \in (2\sigma, 1]$, $r\in(0,1)$.
 \end{description*}
 \medskip
  \begin{description*}
  \item[(A1)\label{a:F1}]
   $F\in C^1(\R)$, $F'\in\Holder{\gamma}(\R)$ for $\gamma \in (0,1]$, and $F'\geq 0$;
  \item[(A2)\label{a:F2}] $F$ is convex;     
  \item[(A3)\label{a:m}] 
     $m_0\in\PX$;
  \end{description*}
  \begin{description*}
  \item[(A4)\label{a:fg1}] 
   $\cF:\CPX\to\Cb$, \quad $\cG:\PX\to C_b(\X)$
   are continuous;
  \item[(A5)\label{a:fg2}]
   $\cF$ and $\cG$ are monotone operators.
   \end{description*}
  \medskip
  \begin{description*}
   \item[(R)\label{D:deg}] There are $\alpha \in (2\sigma, 1]$ and $M\in[0,\infty)$ such that the range
 \begin{align*}
 \cD=
 \big\{\big(\cF(m),\cG(m(T))\big) : \ m \in \CPX \big\}
 \end{align*}
 satisfies $\cD\subset \cD_0(\alpha,M)$, where\footnote{See \autoref{def:holder}, \autoref{def:bounded} of spaces $\Hb{\hd}$ and $B(\T,X)$; \textit{UC} = uniformly continuous.}
 \begin{align*}
    \cD_0(\alpha,M)=\Big\{(f,g) : \ \ (i)\ &\  f\in\UC(\T\times\X)\cap \LC{\hd}, \\
 (ii)\ & \ g\in \Hb{\hd},\\ (iii)\ & \ \sup_{t\in\T}\|f(t)\|_\hd+\|\uT\|_\hd\leq M \Big\}.
\end{align*}
  \end{description*}

\begin{remark}When $\nu$ is absolutely continuous with respect to the Lebesgue measure, \ref{L:deg} is equivalent to the \emph{upper bound} $\frac{d\nu}{dz}\leq C\frac{1}{|z|^{d+2\sigma}}$ for $|z|<1$, and hence is satisfied for the fractional Laplacian $\Delta^{\sigma}$ \cite{MR3156646}, the nonsymmetric nonlocal operators used in finance 
\cite{MR2042661},
and a large class of non-degenerate and degenerate operators.
Any bounded L\'evy measure (``$\sigma\!\downarrow\!0$\,''), and hence any bounded nonlocal (L\'evy) operator, is also included.
Note that there is no further restriction on the tail  of $\nu$ (the $B_1^c$-part) and hence no explicit moment assumption on the L\'evy process and the solution of the FP equation $m$.
See \cite{CJK} for more details.
\end{remark}
\begin{definition}
We say that $(u,m)$ is a classical--very weak solution of \autoref{eq:mfg} if \mbox{$u\in \Cbb$} solves the HJB part in the classical sense\footnote{i.e.~$u$ is a pointwise solution such that $\fL u$ and $u_t$ are continuous.} and $m\in \CPX$ solves the FP part in the sense of distributions. See \cite{CJK} for the precise definition.
\end{definition}
 \begin{theorem}\label{thm:deg}
Assume~\ref{D:deg},~\ref{L:deg},~\ref{a:F1},~\ref{a:m}.
 If in addition
  \begin{enumerate}
      \item \label{item:cor-degen-ex} \ref{a:fg1} holds,
      then there exists a classical--very weak  solution of \autoref{eq:mfg};
      \item \label{item:cor-degen-u1} \ref{a:F2},~\ref{a:fg2} hold and $\frac{2\sigma}{(\hd-2\sigma)}\big(1+\frac1{1-2\sigma}\big)<\gamma$,
      then \autoref{eq:mfg} has at most one classical--very weak solution;
      \item \label{item:cor-degen-u2}  \ref{a:F2},~\ref{a:fg2} hold, $\nu$ is symmetric at the origin, 
      and $\frac{2\sigma}{(\hd-2\sigma)}\big(1+\frac1{1-\sigma}\big)<\gamma$, then \autoref{eq:mfg} has at most one classical--very weak solution.
  \end{enumerate}
  \end{theorem}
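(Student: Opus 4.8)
The plan is to derive all three assertions from the well-posedness theory for fully nonlinear MFG systems established in \cite{CJK}, which is formulated in terms of structural properties of two decoupled problems: the solution map $(f,\uT)\mapsto u$ for the HJB problem \eqref{eq:hjb}, and the solution map $b\mapsto m$ for the FP problem \eqref{eq:fp}, together with the convexity \ref{a:F2} and monotonicity \ref{a:fg2} that drive the Lasry--Lions computation. So the work is to verify these properties in the present strongly degenerate nonlocal setting; this is exactly where \ref{L:deg}, \ref{D:deg} and \ref{a:F1}, and ultimately the inequalities relating $\sigma$, $\hd$ and $\gamma$, enter.

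For \eqref{item:cor-degen-ex}, I would first show that every datum $(f,\uT)\in\cD_0(\hd,M)$ produces, via \eqref{eq:hjb}, a unique bounded classical solution $u\in\Cbb$ (so that $\fL u$ and $u_t$ are continuous) with $u(t)\in\Hb{\hd}$ and $\sup_{t}\|u(t)\|_{\hd}\le C(M)$. As there is no regularising effect, this is a comparison-principle argument: $L^\infty$-bounds come from $\|f\|_\infty,\|\uT\|_\infty$ and $F(0)$, and the $\Holder{\hd}$-bound from comparing $u(t,\cdot+h)$ with $u(t,\cdot)$ shifted by a constant in $x$, which is admissible because $\fL$ annihilates constants. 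Using \ref{L:deg} with the exponent $\hd>2\sigma$ (splitting the L\'evy integral at radius $|x-y|$) then gives $\fL u\in B(\T,\Hb{\hd-2\sigma})$, whence $b:=F'(\fL u)\in B(\T,\Hb{\gamma(\hd-2\sigma)})$ by composition of H\"older maps (\ref{a:F1}), with $b\ge0$ and bounded. For such coefficients \eqref{eq:fp} admits a very weak solution $m\in\CPX$ (obtained by regularising $b$ and passing to the limit, or as the law of the associated time-changed L\'evy process), and both solution maps are continuous in the relevant topologies. Together with \ref{a:fg1} and the range condition \ref{D:deg}, which keeps the data in the fixed class $\cD_0(\hd,M)$ throughout the iteration, these are precisely the hypotheses of the abstract existence theorem of \cite{CJK}, which is proved there by Schauder's fixed point theorem.

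For \eqref{item:cor-degen-u1} and \eqref{item:cor-degen-u2}, uniqueness is the abstract uniqueness theorem of \cite{CJK}, whose Lasry--Lions identity --- pairing the differences of the HJB equations of two solutions $(u_1,m_1),(u_2,m_2)$ of \eqref{eq:mfg} with $m_1-m_2$, and of the FP equations with $u_1-u_2$, and invoking convexity of $F$ (\ref{a:F2}) and monotonicity of $\cF,\cG$ (\ref{a:fg2}) --- forces $F'(\fL u_1)=F'(\fL u_2)$ both $m_1$- and $m_2$-a.e. Hence $m_1$ and $m_2$ both solve \eqref{eq:fp} with the single H\"older coefficient $b=F'(\fL u_1)\in B(\T,\Hb{\gamma(\hd-2\sigma)})$, so that $m_1-m_2$ solves \eqref{eq:fp} with this coefficient and zero initial datum; everything thus reduces to uniqueness for \eqref{eq:fp} with a coefficient of H\"older regularity $\gamma(\hd-2\sigma)$. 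Since $m$ solves \eqref{eq:fp} only against smooth test functions whereas $u_1-u_2$ is merely $\Holder{\hd}$ and $b$ may degenerate, this uniqueness is obtained by a Holmgren-type duality: one tests \eqref{eq:fp} against solutions $\psi$ of the backward dual equation $-\dt\psi-b\,\fL\psi=\phi$ with smooth $\phi$ and passes to the limit, which requires $\psi(t)$ to be H\"older of order strictly above $2\sigma$ so that $\fL\psi$ is a genuine classical term.

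Constructing such solutions of the strongly degenerate dual equation is the main obstacle. There the plan is to produce $\psi$ first as a bounded viscosity solution (Perron's method and comparison), and then upgrade its spatial regularity by a non-standard doubling of variables argument in which the variable-coefficient commutator $(b(x)-b(y))\fL\psi$ is the obstruction and is controlled through $[b]_{\gamma(\hd-2\sigma)}$; iterating this estimate is a bootstrap on the H\"older exponent whose limiting value exceeds $2\sigma$ precisely when $\tfrac{2\sigma}{\hd-2\sigma}\big(1+\tfrac1{1-2\sigma}\big)<\gamma$, which is \eqref{item:cor-degen-u1}. When $\nu$ is symmetric at the origin the odd part of the L\'evy integral cancels, removing a first-order loss at each bootstrap step and improving the threshold to $\tfrac{2\sigma}{\hd-2\sigma}\big(1+\tfrac1{1-\sigma}\big)<\gamma$, which is \eqref{item:cor-degen-u2}. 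Everything outside this dual-equation analysis is a fairly direct adaptation of \cite{CJK}.
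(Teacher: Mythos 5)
Your proposal is correct and follows essentially the same route as the paper: invoke the abstract existence/uniqueness framework of \cite{CJK}, verify the HJB-side conditions by the comparison principle, translation-invariance H\"older estimates and the mapping property $\fL:\Hb{\hd}\to\Hb{\hd-2\sigma}$ under \ref{L:deg}, and reduce uniqueness to FP uniqueness for a coefficient in $B(\T,\Hb{\gamma(\hd-2\sigma)})$, proved by a Holmgren duality argument whose dual solution is built via viscosity solutions, a doubling-of-variables estimate and a bootstrap, with the symmetric-measure cancellation giving the improved threshold. The only (immaterial) deviations are that the paper verifies the FP-uniqueness condition for the averaged coefficient $\int_0^1 F'\big(s\,\fL u_1+(1-s)\fL u_2\big)\,ds$ rather than via your Lasry--Lions reduction to $b=F'(\fL u_1)$, and it uses a homogeneous dual problem with time-reversed coefficient instead of your source-term formulation; both variants need exactly the same regularity thresholds.
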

  \begin{remark}\label{rem:gamma}
 \npar When $\gamma=\hd=1$, the condition in \autoref{thm:deg}\autoref{item:cor-degen-u1} becomes
 \mbox{$8\sigma (1-\sigma)<1$},
 which leads to $2\sigma <\frac{2-\sqrt{2}}{2}\approx \frac{3}{10}$.
 In~\!\autoref{item:cor-degen-u2} we obtain 
 \mbox{$\sigma(7-4\sigma)<1$}
 and then $2\sigma<\frac{7-\sqrt{33}}{4}\approx\frac{4}{13}$.
  \npar See \autoref{section:stochastics} for an example involving the fractional Laplacian and a strongly degenerate power type nonlinearity $F$.
 \end{remark}
 
 \autoref{thm:deg} is proved in \autoref{section:hjb-fp}. We show that it follows from the more general existence and uniqueness theory of \cite{CJK}. The main result is recalled in  \autoref{thm:main} and relies on additional assumptions \ref{R1}--\ref{U2}. Conditions \ref{R1}--\ref{U1} relate to existence, regularity, and stability of solutions of the HJB equation~\eqref{eq:hjb}, and essentially follow from the comparison principle for viscosity solutions
 (see \autoref{thm:hjb-viscosity}).
 These results are gathered in \autoref{thm:hjb-lipschitz} and \autoref{cor:hjb-l1-s}.
 The hard part of this paper is to verify~\ref{U2}, a uniqueness condition for the FP equation~\eqref{eq:fp}---see \autoref{thm:fp-uniqueness} and \autoref{cor:b-uniq}. Let us now discuss this result in more detail.
 
 \subsection*{Degenerate and non-Lipschitz FP equations}
 \hyperref[eq:fp]{Problem~\eqref{eq:fp}} is the FP or forward Kolmogorov equation for the SDE
 \begin{equation}\label{eq:fp-sde}
  dZ(t) = b(t,Z(t))\,dX(t),\qquad Z(0) = Z_0\sim m_0,
 \end{equation}
 where $X$ is the L\'evy process with infinitesimal generator $\fL$. We refer e.g.~to~
 \cite{MR3443169}
for a survey of classical results when $X$ is a Brownian Motion (with drift) and $\fL$ is local with triplet  $(c,a,0)$.
 For general L\'evy processes, we mention the recent results of \cite{MR3771750} for $b$ independent of $t$
 and \cite{MR3949969,MR4168386} for connections to the FP equation.
  
 For local degenerate equations with locally Lipschitz or $W^{2,1}$ coefficients, uniqueness has been proved by Holmgren,
 probabilistic or variational methods~\cite{MR3391701,MR2375067,MR2607035,MR2450159}.
 For degenerate problems with either rougher coefficients or nonlocal diffusions, we are not aware of any prior results.
 Note that if $\fL = c\cdot\nabla$ and $b$ is continuous but not Lipschitz,
 then \autoref{eq:fp} does not have a unique solution~\cite{MR3391701}, and~\cite{MR0164154} shows limitations for the Holmgren method. 
 
 Let us state the subsequent condition.
 \begin{description*}
  \item[(B)\label{a1'':b}]$b\in\Cu$ and $b(t,x)\in[0,B]$
  for fixed $B\in[0,\infty)$ and every $(t,x)\in\T\times\X$;
in addition,
  $b\in\LC{\hb}$ for some $\hb>0$.
 \end{description*} 
 We prove the following result.
  \begin{theorem}\label{thm:fp-uniqueness}
  Assume~\ref{L:deg}, \ref{a:m}, and  \ref{a1'':b}.
  If either\footnote{If $2\sigma\in(0,\frac{3-\sqrt5}2)$, then 
  $2\sigma+ \frac{2\sigma}{1-2\sigma}\in(0,1)$,
  and if $2\sigma\in\big(0,\frac{5-\sqrt{17}}{2}\big)$,
  then $2\sigma+ \frac{2\sigma}{1-\sigma}\in(0,1)$,
  thus in both cases $\beta$ is chosen from a non-empty interval (cf.~\autoref{rem:optimal1}).}
  \begin{enumerate}
      \item\label{item:fp-uniq-deg} $2\sigma\in\big(0,\frac{3-\sqrt{5}}{2}\big)$ and $\hb\in \big(2\sigma+\tfrac{2\sigma}{1-2\sigma},1\big]$; or
      \item\label{item:fp-uniq-deg1} $\nu$ is symmetric at the origin,
       $2\sigma\in\big(0,\frac{5-\sqrt{17}}{2}\big)$,
       and $\hb\in \big(2\sigma+\tfrac{2\sigma}{1-\sigma},1\big]$;
  \end{enumerate}
  then \autoref{eq:fp}
  has precisely one very weak solution.
 \end{theorem}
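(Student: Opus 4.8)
I would prove \autoref{thm:fp-uniqueness} by a Holmgren-type duality argument. Suppose $m_1,m_2$ are very weak solutions of \eqref{eq:fp} with $m_1(0)=m_2(0)=m_0$, and set $\mu=m_1-m_2$, a time-dependent signed measure with $\sup_t|\mu(t)|(\X)\le 2$ and $\mu(0)=0$. Fix $t_1\in\T$ and $\phi\in C_c^\infty(\X)$; testing the very weak formulations of $m_1$ and $m_2$ against a function $\psi$ on $[0,t_1]\times\X$ from the admissible test class (see \cite{CJK}) and subtracting gives
\[
 \int_\X\psi(t_1)\,d\mu(t_1)-\int_\X\psi(0)\,d\mu(0)=\int_0^{t_1}\!\!\int_\X\big(\dt\psi+b\,\fL\psi\big)\,d\mu\,dt .
\]
Since $\mu(0)=0$ and $|\mu|$ is a finite measure, it suffices to construct, for each such $\phi$, an \emph{admissible test function} $\psi$ — one for which $\psi$, $\dt\psi$ and $\fL\psi$ are bounded and continuous on $[0,t_1]\times\X$ — solving the terminal value problem for the dual equation
\[
 \dt\psi+b\,\fL\psi=0\quad\text{on }(0,t_1)\times\X,\qquad \psi(t_1)=\phi .
\]
Then the right-hand side above vanishes, so $\langle\mu(t_1),\phi\rangle=0$; letting $\phi$ range over $C_c^\infty(\X)$ (dense in $C_0(\X)=\Mb^*$'s predual) and $t_1$ over $\T$, and using time-continuity of $m_1,m_2$, yields $\mu\equiv0$, i.e.\ $m_1\equiv m_2$. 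Existence of at least one very weak solution of \eqref{eq:fp} is obtained separately, e.g.\ from a weak solution of the SDE \eqref{eq:fp-sde} via the martingale problem (the coefficient $b$ being bounded and continuous), or by compactness from equations with mollified $b$.

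To build $\psi$ I would work with viscosity solutions. The dual equation is linear with a bounded nonnegative coefficient $b$ in front of the nonlocal term $\fL\psi$, hence degenerate elliptic, so a bounded continuous viscosity solution exists by Perron's method and a comparison principle (in the spirit of \autoref{thm:hjb-viscosity}), and the maximum principle gives $\|\psi\|_{L^\infty}\le\|\phi\|_{L^\infty}$ (the operator annihilates constants and $b\ge 0$). To make $\psi$ a classical, pointwise-meaningful solution I would first mollify $b$ in $x$ to a smooth $b_\delta\in[0,B]$ with $\sup_t[b_\delta(t)]_{\hb}\le\sup_t[b(t)]_{\hb}$, solve classically for $\psi_\delta$, and pass to the limit; since $\|b-b_\delta\|_{L^\infty}\lesssim\delta^{\hb}$, the mismatch $\dt\psi_\delta+b\fL\psi_\delta=(b-b_\delta)\fL\psi_\delta$ contributes at most $\lesssim\delta^{\hb}\int_0^{t_1}\|\fL\psi_\delta(t)\|_{L^\infty}\,dt$ to the duality identity, which is negligible once $\|\fL\psi_\delta\|_{L^\infty}$ is controlled.

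The core — and the step I expect to be the main obstacle — is the regularity estimate: $\sup_{t\le t_1}[\psi(t)]_{\Holder{\alpha}}<\infty$ for some $\alpha\in(2\sigma,1)$. This is precisely what makes $\fL\psi$ admissible: splitting the jump integral at $|z|=1$ and using \ref{L:deg} yields $\|\fL\psi(t)\|_{L^\infty}\lesssim(\alpha-2\sigma)^{-1}\|\psi(t)\|_{\Holder{\alpha}}$, after which $\dt\psi=-b\fL\psi$ is continuous, $\psi$ is $C^1$ in $t$, and $\psi$ qualifies as a test function. One cannot simply propagate Lipschitz or higher regularity: differentiating the equation brings in $\nabla b_\delta\sim\delta^{\hb-1}$ and a Gronwall blow-up $e^{c\delta^{\hb-1}t_1}$, far worse than $\delta^{-\hb}$. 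Instead one estimates H\"older seminorms directly by a doubling of variables: $w(t,x,y)=\psi(t,x)-\psi(t,y)$ solves a linear nonlocal equation with source $-(b(t,x)-b(t,y))\fL\psi(t,y)$, where $|b(t,x)-b(t,y)|\le[b]_{\hb}|x-y|^{\hb}$, and one compares $w$ with $\Lambda(t)\,\varphi(|x-y|)$ for a bounded profile $\varphi$ with $\varphi(r)\sim r^\alpha$ near $0$. The \emph{non-standard} feature is that the naive profile $|x-y|^\alpha$ is \emph{not} a supersolution — applying $\fL$ in $x$ (or $y$) to it leaves a positive residue of size $O(1)$ at the diagonal plus a term of size $(\alpha-2\sigma)^{-1}|x-y|^{\alpha-2\sigma}$ of too low homogeneity nearby — so $\varphi$ must be carefully engineered so that $\fL_x\varphi$, $\fL_y\varphi$ become controllable, the effective gain in spatial regularity per comparison step being limited by the order $2\sigma$; when $\nu$ is symmetric at the origin the odd part of the first difference cancels and this loss is reduced. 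Bootstrapping, the per-step losses accumulate as a geometric-type series, and the procedure terminates with $\psi(t)\in\Holder{\alpha}$ uniformly in $t$ for $\alpha$ up to essentially $\hb-\tfrac{2\sigma}{1-2\sigma}$ (resp.\ $\hb-\tfrac{2\sigma}{1-\sigma}$ when $\nu$ is symmetric at the origin). This exceeds $2\sigma$ exactly when $\hb>2\sigma+\tfrac{2\sigma}{1-2\sigma}$ (resp.\ $\hb>2\sigma+\tfrac{2\sigma}{1-\sigma}$), i.e.\ under the hypotheses of \autoref{item:fp-uniq-deg} and \autoref{item:fp-uniq-deg1}, and the stated ranges for $2\sigma$ are exactly those for which such an admissible $\hb\le 1$ exists. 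Granting this estimate, the remaining pieces — the duality identity, the Perron existence, the $\delta\to 0$ limit, and the conclusion $m_1\equiv m_2$ — are comparatively routine.
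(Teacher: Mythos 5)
Your overall scheme coincides with the paper's: a Holmgren duality argument that reduces uniqueness for \eqref{eq:fp} to constructing an admissible test function solving the degenerate dual problem \eqref{eq:fp-dual}, obtained as a viscosity solution (Perron plus comparison) and upgraded to a classical solution once one has a spatial H\"older estimate, uniform in time, of order strictly larger than $2\sigma$; you even state the correct limiting exponents $\hb-\tfrac{2\sigma}{1-2\sigma}$ and, in the symmetric case, $\hb-\tfrac{2\sigma}{1-\sigma}$, and correctly read off from them the hypotheses on $\hb$ and $2\sigma$.

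The genuine gap is that the two facts you yourself flag as the main obstacle are asserted rather than proved, and they are the entire technical content of the theorem (they occupy all of \autoref{section:dual}, i.e.\ \autoref{lem:fp-dual-comparison} and \autoref{thm:ext-holder-sol-deg}). First, comparison for \eqref{eq:fp-dual} is not ``in the spirit of'' \autoref{thm:hjb-viscosity}: there the nonlinearity has no $x$-dependent factor multiplying $\fL u$, whereas here the doubling of variables produces the error term $\big(b(t_*,x_*)-b(s_*,y_*)\big)\fLhigh v(s_*,y_*)$, of size $|x_*-y_*|^{\hb}\,r^{-2\sigma}$, and closing it forces a coupling of the splitting radius to the doubling parameter (the paper takes $r^{2\sigma}=\epsilon^{\hb/2}$); this is precisely where the restriction $\hb\ge\tfrac{2\sigma}{1-2\sigma}$, and hence the admissible range of $2\sigma$, first enters --- already at the comparison stage, before any regularity. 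Second, your profile-comparison sketch for the H\"older bound is circular as stated: the source $-(b(t,x)-b(t,y))\fL\psi(t,y)$ is controlled only through $\|\fL\psi\|_\infty$, i.e.\ through the very seminorm being estimated (or, in your mollified version, through a quantity with no a priori $\delta$-uniform bound), and you neither exhibit the engineered profile $\varphi$ nor the absorption mechanism that closes the loop. The paper instead works directly on the viscosity solution, runs a non-standard doubling with a Lyapunov localization and the $r$-splitting, compares the sup $M_{\epsilon,\delta}$ with its value at $t=0$, and reaches the exponent only through the explicit bootstrap recursion $\bstr_{n+1}=\tfrac{2\bstr_n}{(\hb+\bstr_n)(1-2\sigma)+2\sigma}$, whose limit yields $\hb_0=\hb-\tfrac{2\sigma}{1-2\sigma}$ (resp.\ $\hb-\tfrac{2\sigma}{1-\sigma}$); your claim that the ``per-step losses accumulate'' to exactly these values is unsupported. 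Finally, the mollification detour does not help: classical solvability of the mollified, still degenerate equation and the $\delta$-uniform control of $\|\fL\psi_\delta\|_\infty$ require exactly the same estimate, and the paper avoids mollification altogether via the viscosity-to-classical upgrade of \autoref{lemma:classical}. So the proposal is a correct outline with the right thresholds, but the decisive estimates that make the theorem true are missing.
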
 
  
From this result condition \ref{U2} follows.
Note that $b$ can vanish and does 
  not have to be locally Lipschitz (or have weak second derivatives).
  Existence of solutions is established in \cite[Theorem 6.6]{CJK}. The uniqueness proof is original and follows from a Holmgren-type argument\footnote{Holmgren's method is based on a duality argument and goes back the early 20th century and linear PDEs~\cite[Chapter 5]{MR1301779}.
 Such arguments are widely used in the modern theory of PDEs, including nonlinear equations, see e.g.~\cite{MR2286292}.} 
 which uses a suitable test function
 solving a strongly degenerate dual equation. The construction of such a test function is new and relies on the nonlocal nature of the problem. It is based on viscosity solutions techniques and a bootstrapping argument. The reason we are able to treat degenerate and non-Lipschitz problems, is our non-standard adaptation of the doubling of variables argument in \autoref{section:dual}.

\section{Controlled time change rates and examples of degenerate MFGs}
\label{section:stochastics}
\subsection{The control problem and the HJB equation}
 We briefly recall the model introduced in \cite{CJK}.
 Let $X_t$ be a L\'evy process (with infinitesimal generator given by~\ref{L:deg}), 
  generating the filtration $\mathcal{F}_t$.
 For $(t,x)\in\T\times\X$ and $s\geq t$,
 let $X_s^{t,x}= x+ X_s-X_t$.
 Then, for an absolutely continuous random time change $\rtc_s$ such that $\rtc_t=t$,
 $\rtc_s'$~is deterministic at $s=t$, and $\rtc_{s+h}-\rtc_{s}$ is independent of $\mathcal{F}_{\rtc_s}$
 for all $s,h\geq0$, we define a time-changed process 
     $Y^{t,x,\rtc}_s = X_{\rtc_s}^{t,x}$.
 It is an inhomogeneous Markov process.
To control $Y_s^{t,x,\rtc}$, we introduce a running gain (profit, utility) $\ell$, a
terminal gain $\uT$, and an expected total gain functional 
 \begin{equation*}
  J(t,x,\rtc) = E\bigg(\int_t^T
  \ell\big(s,Y_s^{t,x,\rtc},\rtc_s'\big)\,ds+\uT\big(Y_T^{t,x,\rtc}\big)\bigg).
 \end{equation*}
We consider the corresponding value function $u$ (the optimal value of $J$), given by
 \begin{equation}\label{eq:JJ}
  u(t,x) = \sup_{\rtc\in\mathcal{A}} J(t,x,\rtc),
 \end{equation}
where $\mathcal{A}$ is a suitable set of admissible controls.
Assume $\ell(t,x,\zeta) = -L(\zeta)+f(t,x)$.
Using the dynamic programming principle, we then obtain the following Bellman equation (see \cite{CJK} for more details)
\begin{equation*}
  -\dt u = \sup_{\zeta\geq0}\Big(\zeta \fL u -L(\zeta)\Big)  + f(t,x).
  \end{equation*}
satisfied e.g.~in the viscosity sense (see \autoref{section:preliminaries}),
where $\zeta$  denotes  the (deterministic) value of $\rtc'_t$ to simplify the notation.
  The Bellman equation can be expressed in terms
  of the Legendre--Fenchel transform $F(z)=\sup_{\zeta\geq0}\big(\zeta z-L(\zeta)\big)$ as
  \begin{equation*}
  -\dt u = F\big(\fL u\big) + f(t,x).
  \end{equation*}

  For convenience we provide some prototypical examples of cost functions~$L$ that can (or cannot) be used to derive \autoref{eq:mfg}, and their corresponding Legendre--Fenchel transforms.
  Recall the non-degeneracy assumption used in \cite{CJK}.
  \begin{description*}
 \item[(A1$^{\prime}$)\label{F:ndeg}] \ref{a:F1} holds and $F'\geq\kappa$ for some $\kappa>0$ (i.e.~$F$ is strictly increasing).
 \end{description*}
 \begin{table}[ht]
  $$\arraycolsep=0.15in\def\arraystretch{1.25}\begin{array}{c|c|c}
      &L:[0,\infty)\to\R\cup\{\infty\} &  F:\R\to\R \\\hline
      (a)&
     {\text{\large $\chi$}}_{\{ \kappa\}}(\zeta) & \kappa z\\
      (b)&{\text{\large $\chi$}}_{[0,\kappa]}(\zeta)     & \kappa z^+\\
      (c)&\big({\text{\large $\chi$}}_{[0,\kappa]}(\zeta)+\epsilon\big)(\frac{1}{\kappa}\zeta^2-\zeta) &
      \mathbbm{1}_{[-\epsilon,\epsilon)}(z)\frac{\kappa}{4\epsilon}(z+\epsilon)^2+\mathbbm{1}_{[\epsilon,\infty)}(z)\kappa z\\
      (d)&\frac{1}{q}\zeta^q & \frac{q-1}{q}(z^+)^{\frac{q}{q-1}} \\
      (e)&\zeta\log(\zeta)-\zeta & e^z\\
      (f)&\big(\chi_{[\kappa,\infty)}(\zeta)+1\big)L_0(
      \zeta-\kappa)& F_0(z)+\kappa z
  \end{array}\vspace{0.5\baselineskip}$$
 \caption{Pairs of Legendre--Fenchel conjugate functions (see \mbox{\autoref{rem:table}}).
 Here ${\text{\large $\chi$}}_A(x) = \infty$ for $x\not\in A$,
 ${\text{\large $\chi$}}_A(x) = 0$ for $x\in A$; and $z^+ = \max(z,0)$; $(L_0,F_0)$ is an arbitrary conjugate pair.}\label{tab:L-transform}
 \end{table}\vspace{-\baselineskip}
  \begin{remark}[On \autoref{tab:L-transform}]\label{rem:table}
 \npar Players are forced to always choose the same control,~$\kappa$.
 The MFG system reduces to a pair of linear heat equations.
 \npar Players can choose a control between $0$ and $\kappa$, for a constant (zero) cost.
 Hamiltonian $F$ is not differentiable, \ref{a:F1} fails.
 \npar Players can choose a control between $0$ and $\kappa$, and the cost $L$ is strongly convex on $[0,\kappa]$.
 Hamiltonian $F\in C^1(\R)$, with $F'$ Lipschitz-continuous, is sufficiently regular, while not strictly convex (note how this can be used to approximate the previous case).
 Conditions~\ref{a:F1}, \ref{a:F2} are satisfied, but not \ref{F:ndeg}.
 \npar The standard linear-``quadratic'' control in the case of the fractional Laplacian (see the example below).
 \npar A cost function resulting in $F\in C^\infty(\R)$; $F>0$, but \ref{F:ndeg} is not satisfied.
 \npar This template can be used to modify any conjugate pair $(L_0,F_0)$ in order to adjust the lower bound on the derivative, e.g.~to satisfy condition \ref{F:ndeg}.
 \end{remark}
 
 \subsection{Example of a strongly degenerate fully-nonlinear MFG}
Let 
$L(\zeta)=\frac{1}{q}\zeta^q$ for $q>1$, and assume the infinitesimal generator of $X$ is the fractional Laplacian,
$\fL= -\fLf$. 
We have $F(z)= \frac{q-1}{q} (z^+)^{\frac q{q-1}}$
(cf.~\autoref{tab:L-transform} above) and $\fLs=\fL$, and hence the MFG system takes the form
\begin{align}\label{eq:mfg-ex}\left\{\begin{aligned}
 & -\dt u - \frac{q-1}{q}\Big([-\fLf u]^+\Big)^{\frac q{q-1}}=\cF(m),\\
  & \ \dt m  + \fLf \Big([-\fLf u]^+m\Big)^{\frac1{q-1}}=0.
\end{aligned}\right.\end{align}

These equations are strongly degenerate and $F$
satisfies~\ref{a:F1} with $\gamma = \tfrac 1{q-1}$, as well as \ref{a:F2}.
Existence of solutions of \autoref{eq:mfg-ex} follows from \autoref{thm:deg}\autoref{item:cor-degen-ex}
if $2\sigma \in(0,1)$, $m_0$ satisfies \ref{a:m}, $\cF, \cG$ satisfy \ref{a:fg1} and \ref{D:deg}.
 If e.g.~$\alpha=1$, $q<q_c(\sigma)=\frac{1+\sigma}{2\sigma(2-\sigma)}$, and \ref{a:fg2} holds, we also have uniqueness (see \autoref{thm:deg}\autoref{item:cor-degen-u2}).
Note that $q_c$ is decreasing, $q_c(\frac12)=1$, and $\lim\limits_{\sigma\to0^+}q_c=\infty$.

\subsection{Relation with classical continuous control} When the L\'evy process $X$ is self-similar,\footnote{$X$ is
  self-similar if there exists $c>0$ such that for all $a,t>0$, $a^c
X_{at}=X_t$ in distribution.} the control of the time change rate introduced in \cite{CJK} can be
interpreted as the classical continuous control,
i.e.~control of the size of the spatial increments of the process. Consider the optimal control problem~\eqref{eq:JJ} with new controls and process $Y_t$: Let the non-negative control processes $\lambda$ replace $\rtc'$ and controlled process $Y_t$ now be given by the SDE
 \begin{align*}
  dY_s =  \lambda_s dX_s=\lambda_s\int_\X \, z\, \tilde{N}(dt,dz), \quad \text{and} \quad Y_t=x, 
 \end{align*}
 where $\tilde{N}$ is
 the compensated Poisson measure defined from
 $X$.\footnote{$\tilde{N}(dt,dz)\!=\!
   N(dt,dz)-\mathbbm{1}_{B_1}(z)\,\nu(dz)\,dt$, where $N$ is a
   Poisson random measure with intensity~$\nu$.} 
This is a classical control problem, and 
 under suitable
assumptions it leads to the following Bellman equation (see~\cite{MR2974720,MR4047981})
\begin{align}\label{eq:HJB4}
  -\dt u=\sup_\lambda \bigg( \pv\int_\X \big( u(x+\lambda z)- u(x)\big)
  \frac{c_{d,\sigma}}{|z|^{d+2\sigma}}\,dz- \widehat{L}(\lambda) +f(s,x)\bigg),
  \end{align}
where $\pv$ denotes the \emph{principal value}.
Self-similarity (seen through $\nu$) then yields
\begin{align*}
  &\pv\int\big( u(x+\lambda z) -
  u(x)\big)\frac{c_{d,\sigma}}{|z|^{d+2\sigma}}\,dz\\
  &\qquad=\lambda^{2\sigma}\pv\int\big(u(x+z) - u(x)\big)
  \frac{c_{d,\sigma}}{|z|^{d+2\sigma}}\,dz=-\lambda^{2\sigma}\fLf u(x).
  \end{align*}
Let $\lambda^{2\sigma}=\zeta$ and $\widehat{L}(\lambda)= \frac{1}{q}\lambda^{\frac{q}{2\sigma}}= L(\zeta)$,
and $f=\cF(m)$.
Then the Bellman equations in \eqref{eq:mfg-ex} and \eqref{eq:HJB4} coincide.
This means that in this case the classical continuous control problem
and the original controlled time change rate problem coincide as well.

\section{Preliminaries}\label{section:preliminaries}  
\subsection{L\'evy measures and operators.}
 The following notion allows us to work without moment assumptions on the tail of the L\'evy measure.
 \begin{definition}
  A real function $V\in C^2(\X)$ is a Lyapunov function if $V(x) = V_0\big(\sqrt{1+|x|^2}\big)$
  for some  subadditive, non-decreasing function $V_0:[0,\infty)\to[0,\infty)$
  such that $\|V_0'\|_\infty,\|V_0''\|_\infty\leq 1$, and $\lim\limits_{x\to\infty}V_0(x)= \infty$.
 \end{definition}
 See \cite{CJK} for more details on how it can be used to characterize tightness and in turn permit any probability measure as initial data $m_0$ and more refined results. In this paper we only need the subsequent observation \cite[Corollary 4.12]{CJK}.
\begin{lemma}\label{lemma:levy-lyapunov}
  For every L\'evy operator (in particular satisfying~\ref{L:deg}), 
  there exists a Lyapunov function $V$ such that 
  \mbox{$\|\fL V\|_\infty <\infty$}.
 \end{lemma}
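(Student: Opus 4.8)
The plan is to take the Lyapunov function in the form $V(x)=V_0\big(\sqrt{1+|x|^2}\big)$ and to estimate $\fL V$ by splitting the defining integral over the near region $\{|z|\le 1\}$ and the far region $\{|z|>1\}$. On the near region I would use that $V$ is globally Lipschitz with constant at most $\|V_0'\|_\infty\le 1$: indeed $|\nabla V(x)|=|V_0'(\sqrt{1+|x|^2})|\,\tfrac{|x|}{\sqrt{1+|x|^2}}\le\|V_0'\|_\infty$, so $|V(x+z)-V(x)|\le|z|$ and hence
\begin{equation*}
 \int_{|z|\le 1}\big|V(x+z)-V(x)\big|\,\nu(dz)\le\int_{|z|\le 1}\big(1\wedge|z|\big)\,\nu(dz)<\infty
\end{equation*}
uniformly in $x$, by \autoref{def:levy}. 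On the far region I would use the elementary estimate $\sqrt{1+|x+z|^2}\le\sqrt{1+|x|^2}+|z|$ together with the monotonicity and subadditivity of $V_0$ to obtain $|V(x+z)-V(x)|\le V_0(|z|)$, again uniformly in $x$; the far part of $\fL V$ is then bounded provided
\begin{equation*}
 \int_{|z|>1}V_0(|z|)\,\nu(dz)<\infty .
\end{equation*}
Since $1\wedge|z|=1$ on $B_1^c$, the L\'evy condition already gives $\nu(B_1^c)<\infty$, so everything reduces to building a single $V_0$ with the structural properties of a Lyapunov function (subadditive, non-decreasing, $\|V_0'\|_\infty,\|V_0''\|_\infty\le 1$, $V_0\to\infty$) and satisfying this integrability bound.

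For the construction I would look for $V_0$ concave with $V_0(0)=0$: a non-decreasing concave function vanishing at the origin is automatically subadditive, and writing $V_0(r)=\int_0^r g(s)\,ds$ with $g\in C^1$, $g\ge 0$, $g$ non-increasing, $g(0)\le 1$, $\|g'\|_\infty\le 1$, makes $V_0$ of class $C^2$ with $\|V_0'\|_\infty=g(0)\le 1$, $\|V_0''\|_\infty=\|g'\|_\infty\le 1$, and $V_0\to\infty$ exactly when $\int_0^\infty g=\infty$. By Fubini,
\begin{equation*}
 \int_{|z|>1}V_0(|z|)\,\nu(dz)\le g(0)\,\nu(B_1^c)+\int_1^\infty g(s)\,\nu\big(\{|z|\ge s\}\big)\,ds ,
\end{equation*}
and $\widehat F(s):=\nu(\{|z|\ge s\})$ is finite and decreases to $0$ as $s\to\infty$. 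Thus it suffices to find a smooth non-increasing $g\ge 0$ with $g(0)\le 1$, $\|g'\|_\infty\le 1$, $\int_0^\infty g=\infty$, and $\int_1^\infty g\,\widehat F<\infty$. This is a de la Vall\'ee--Poussin-type selection: choose $1=t_1<t_2<\cdots\to\infty$ with $\widehat F(t_n)\le 2^{-n}$ and with the gaps $\ell_n=t_{n+1}-t_n$ large enough --- every constraint below is a lower bound on $\ell_n$ --- that $g_n:=n/\ell_n$ is non-increasing in $n$ and $g_1\le 1$; the piecewise-constant function $g=\sum_n g_n\mathbbm{1}_{[t_n,t_{n+1})}$ then satisfies $\int_0^\infty g=\sum_n n=\infty$ and $\int_1^\infty g\,\widehat F\le\sum_n g_n\ell_n 2^{-n}=\sum_n n\,2^{-n}<\infty$, and a standard smoothing near each breakpoint $t_n$ (with width at most $\min(\ell_n,\,|g_{n-1}-g_n|,\,\varepsilon_n)$, which is harmless because the jumps $|g_{n-1}-g_n|$ are summable) turns $g$ into a $C^1$ non-increasing function keeping $\|g'\|_\infty\le 1$ and both integral properties. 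Setting $V(x)=V_0(\sqrt{1+|x|^2})$ and combining with the two splitting estimates gives $\|\fL V\|_\infty<\infty$.

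The main obstacle is exactly this construction of $V_0$: one must reconcile four competing demands --- growth to infinity, integrability against a L\'evy tail about which nothing quantitative is assumed, concavity (to obtain subadditivity for free), and the normalisations $\|V_0'\|_\infty,\|V_0''\|_\infty\le 1$. The diagonal choice of the breakpoints $t_n$ is what makes this possible, since enlarging the $t_n$ simultaneously improves the tail bound $\widehat F(t_n)\le 2^{-n}$ and leaves room to keep $g_n$ decreasing; the smoothing step is routine but must be carried out so as not to spoil monotonicity or the $C^2$-normalisation. Everything else --- the two uniform-in-$x$ estimates and the verification that the resulting $V$ meets the definition of a Lyapunov function --- is then immediate.
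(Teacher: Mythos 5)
Your construction is correct in substance, but note that it is not the route taken here: the paper gives no proof of this lemma at all, quoting it directly from the companion work \cite[Corollary 4.12]{CJK}, so your argument should be judged as a self-contained alternative. As such it checks out: the splitting of $\fL V$ at $|z|=1$, the Lipschitz bound $|\nabla V|\le\|V_0'\|_\infty\le 1$ against $\int_{B_1}|z|\,\nu(dz)<\infty$, the inequality $\sqrt{1+|x+z|^2}\le\sqrt{1+|x|^2}+|z|$ combined with monotonicity and subadditivity to get $|V(x+z)-V(x)|\le V_0(|z|)$ uniformly in $x$, the use of concavity with $V_0(0)=0$ to obtain subadditivity for free, and the de la Vall\'ee--Poussin-type choice of breakpoints against the tail function $\widehat F(s)=\nu(\{|z|\ge s\})$ (which is finite and tends to $0$ because $\nu(B_1^c)<\infty$) all work, and they correctly exploit that no tail moment of $\nu$ is assumed. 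Two small repairs are needed in the final construction: define $g\equiv g_1$ on $[0,t_1)$, otherwise the piecewise-constant $g$ is not non-increasing (and $V_0$ not concave) on all of $[0,\infty)$; and the smoothing width at each breakpoint must be taken \emph{at least} a fixed multiple of the jump $|g_{n-1}-g_n|$, not at most, since a transition of height $J$ over width $w<J$ forces a slope exceeding $1$ and would violate $\|V_0''\|_\infty\le 1$. This costs nothing: $g_n=n/\ell_n\le g_1\le 1$ forces $\ell_n\ge n$ while the jumps are bounded by $1$ and summable, so the widened transition regions fit inside the intervals and perturb neither $\int_0^\infty g=\infty$ nor $\int_1^\infty g\,\widehat F<\infty$.
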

 Next we prove a result concerning L\'evy operators satisfying~\ref{L:deg}.
 \begin{proposition}\label{prop:lap-lip}
  Assume~\ref{L:deg} and $\phi\in \Hb{\hdd}$ for some $\hdd\in(2\sigma,1]$.
  Then
  \begin{align}\label{eq:lap-lip1}
     \|\fL \phi\|_\infty 
     \leq \frac{K}{\hdd-2\sigma} [\phi]_{\hdd} +2\|\phi\|_\infty \nu\big(B_1^c\big)
  \end{align}
  and 
  \begin{align}\label{eq:lap-lip2}
   [\fL \phi(x)]_{\hdd-2\sigma}
   \leq 2\Big(\frac{K}{\hdd-2\sigma}+\nu\big(B_1^c\big)\Big)[\phi]_{\hdd}.
  \end{align}
  Consequently, $\fL: \Hb{\hdd}\to \Hb{\hdd-2\sigma}$ is a bounded operator.
 \end{proposition}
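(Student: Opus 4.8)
The plan is to estimate $\fL\phi(x)=\int_\X(\phi(x+z)-\phi(x))\,\nu(dz)$ by splitting the domain of integration at a length scale adapted to the quantity being bounded, and feeding the resulting truncated moments of $\nu$ into assumption~\ref{L:deg}. For the sup bound~\eqref{eq:lap-lip1} I would split at $|z|=1$: on $B_1$ the H\"older estimate gives $|\phi(x+z)-\phi(x)|\le[\phi]_\hdd|z|^\hdd$ (legitimate, since $x+z\in B_1(x)$), while on $B_1^c$ we use the crude bound $|\phi(x+z)-\phi(x)|\le 2\|\phi\|_\infty$. The second contribution is $2\|\phi\|_\infty\nu(B_1^c)$, with $\nu(B_1^c)<\infty$ by \autoref{def:levy}. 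For the first, note that on $B_1$ one has $1\wedge|z|^\hdd/r^\hdd\ge|z|^\hdd$, so~\ref{L:deg} with exponent $\hdd$ gives $\int_{B_1}|z|^\hdd\,\nu(dz)\le\frac{K}{\hdd-2\sigma}r^{-2\sigma}$ for every $r\in(0,1)$; letting $r\uparrow1$ yields $\int_{B_1}|z|^\hdd\,\nu(dz)\le\frac{K}{\hdd-2\sigma}$, and~\eqref{eq:lap-lip1} follows.

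For~\eqref{eq:lap-lip2}, fix $x$ and $y\in B_1(x)$, put $h=y-x$, and write
\[
 \fL\phi(x)-\fL\phi(y)=\int_\X\big[(\phi(x+z)-\phi(x))-(\phi(y+z)-\phi(y))\big]\,\nu(dz).
\]
I would split the domain into $\{|z|\le|h|\}$, $\{|h|<|z|\le1\}$, and $\{|z|>1\}$. On $\{|z|\le|h|\}$ I bound the two increments separately by $[\phi]_\hdd|z|^\hdd$, getting $2[\phi]_\hdd\int_{|z|\le|h|}|z|^\hdd\,\nu(dz)$. On $\{|h|<|z|\le1\}$ I regroup the integrand as $(\phi(x+z)-\phi(y+z))-(\phi(x)-\phi(y))$ and bound each difference by $[\phi]_\hdd|h|^\hdd$, obtaining $2[\phi]_\hdd|h|^\hdd\,\nu(\{|h|<|z|\le1\})$. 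Crucially, I would add these two contributions \emph{before} invoking~\ref{L:deg}: since
\[
 \int_{|z|\le|h|}|z|^\hdd\,\nu(dz)+|h|^\hdd\,\nu(\{|h|<|z|\le1\})=|h|^\hdd\int_{B_1}\Big(1\wedge\tfrac{|z|^\hdd}{|h|^\hdd}\Big)\,\nu(dz),
\]
assumption~\ref{L:deg} with $r=|h|$ bounds their sum by $\frac{K}{\hdd-2\sigma}|h|^{\hdd-2\sigma}$. On $\{|z|>1\}$ I again use the regrouped form, bounding it by $2[\phi]_\hdd|h|^\hdd\le2[\phi]_\hdd|h|^{\hdd-2\sigma}$ (as $|h|\le1$), which contributes $2[\phi]_\hdd\,\nu(B_1^c)\,|h|^{\hdd-2\sigma}$. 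Adding the three regions gives~\eqref{eq:lap-lip2}, uniformly in $x$; combined with~\eqref{eq:lap-lip1} this yields $\|\fL\phi\|_{\hdd-2\sigma}\le C\|\phi\|_\hdd$, i.e.~boundedness of $\fL\colon\Hb{\hdd}\to\Hb{\hdd-2\sigma}$.

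The only real subtlety is this bookkeeping: choosing the cutoff radius to be exactly $|h|$, and in particular summing the two near-origin pieces before applying~\ref{L:deg} instead of estimating them independently --- done naively one would lose a factor and end up with $\frac{2K}{\hdd-2\sigma}$ rather than $\frac{K}{\hdd-2\sigma}$ in~\eqref{eq:lap-lip2}. One must also check that every application of the H\"older seminorm $[\phi]_\hdd$ compares points at distance at most $1$, which holds throughout (either $|z|\le1$, or the compared pairs $x+z,\,y+z$ and $x,\,y$ differ by $h$ with $|h|\le1$). Beyond that the estimates are elementary, so I expect no genuine obstacle.
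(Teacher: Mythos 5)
Your proposal is correct and follows essentially the same route as the paper: the sup bound \eqref{eq:lap-lip1} by splitting at $|z|=1$ and letting $r\uparrow 1$ in \ref{L:deg}, and the H\"older bound \eqref{eq:lap-lip2} by cutting at $|z|=|x-y|$ and recombining the two near-origin pieces into $|x-y|^{\hdd}\int_{B_1}\bigl(1\wedge |z|^{\hdd}/|x-y|^{\hdd}\bigr)\,\nu(dz)$ before invoking \ref{L:deg}, exactly as in the paper's estimate of $\mathcal{I}_1$, with the tail handled as in $\mathcal{I}_2$. The only difference is cosmetic bookkeeping (three regions versus $\mathcal{I}_1+\mathcal{I}_2$), so there is nothing to fix.
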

 \begin{proof}
  Estimate~\eqref{eq:lap-lip1} is a simple consequence of~\ref{L:deg}.
  To obtain~\eqref{eq:lap-lip2}, we write
  \begin{align*}
   |\fL &\phi(x)-\fL \phi(y)|
   \leq \int_{B_1}{\big|\big(\phi(x+z)-\phi(x)\big) - \big(\phi(y+z)-\phi(y)\big)\big|}\,\nu(dz)\\ 
   &\quad+ \int_{B_1^c}\big|\big(\phi(x+z)-\phi(y+z)-\phi(x)+\phi(y)\big)\big|\,\nu(dz)
   = \mathcal{I}_1+\mathcal{I}_2.
  \end{align*}
  For $|x-y|\leq 1$ (cf.~\autoref{def:holder}, where $y\in B_1(x)$), we get
  \begin{align*}
   \mathcal{I}_1 &\leq 
   2[\phi]_{\hdd}\bigg(\int_{B_{|x-y|}} {|z|^\hdd}\,\nu(dz)
   + \int_{B_1\setminus B_{|x-y|}}{ |x-y|^{\hdd}}\,\nu(dz)\bigg) \\
   &= 2[\phi]_{\hdd}|x-y|^\hdd\int_{B_1} \bigg(1\wedge\frac{|z|^\hdd}{|x-y|^\hdd}\bigg)\,\nu(dz)
   \leq \frac{2K}{\hdd-2\sigma} [\phi]_{\hdd}|x-y|^{\hdd-2\sigma}.
  \end{align*} 
   Finally,
  \begin{equation*}
   \mathcal{I}_2\leq 2\nu\big(B_1^c\big) [\phi]_{\hdd} |x-y|^{\hdd}
   \leq 2\nu\big(B_1^c\big) [\phi]_{\hdd}|x-y|^{\hdd-2\sigma}.\qedhere
  \end{equation*}
 \end{proof}
 \subsection{Viscosity solutions}
 In this section we define viscosity solutions and give results for \autoref{eq:hjb}.
 Let $(t,x,\ell)\mapsto \mathcal{F}\big(t,x,\ell)$ and $w_0$ be continuous functions,
 and $\mathcal{F}$ be non-decreasing in $\ell$.
 For $\fL$ satisfying~\ref{L:deg} 
 consider the following problem
 \begin{align}\label{eq:viscosity}
  \left\{\begin{aligned}
   \dt w&= \mathcal{F}\big(t,x,(\fL w)(t,x)\big),\quad&\text{on $\T\times\X$},\\
   w(0) &= w_0,\quad&\text{on $\X$}.
  \end{aligned}\right.
 \end{align}
 For $0\leq r<\infty$ and $p\in\X$ we introduce linear operators
 \begin{equation*}
 \begin{split}
  \fLhigh \phi(x) 
  &= \int _{B_r^c}\big(\phi(x+z)-\phi(x)\big)\,\nu(dz),\\
  \fLlow \phi(x)  &= \int_{B_r} \big(\phi(x+z)-\phi(x)\big)\,\nu(dz),
  \end{split}
 \end{equation*}
 defined for
 bounded semicontinuous and $C^1$ functions respectively.
 
 \begin{definition}\label{def:viscosity}
  A bounded upper-semicontinuous function $u^-:\Tb\times\X\to\R$ is a \textit{viscosity subsolution}
  of \autoref{eq:viscosity} if 
  \begin{enumerate}
   \item $u^-(0,x)\leq w_0(x)$ for every $x\in\X$;
   \item\label{item:tf1} for every $r\in(0,1)$, test function $\phi\in C^1(\T\times\X)$,
   and a maximum point $(t,x)$ of $u^--\phi$ we have
   \begin{align*}\hspace{.5\mathindent}
    \dt \phi(t,x) - \mathcal{F}\Big(t,x,\big(c\cdot \nabla\phi 
    + \fLhigh (u^-)+\fLlow\phi\big)(t,x)\Big) \leq 0.
   \end{align*}
  \end{enumerate}
 \end{definition}
  A supersolution is defined similarly, replacing max, upper-semicontinuous, and ``$\leq$'' by min, lower-semicontinuous, and ``$\geq$''. A viscosity solution is a sub- and supersolution at the same time.
   \begin{remark}\label{rem:viscosity}
  \npar
  Bounded classical solutions
  are bounded viscosity solutions.
  \npar\label{rem:c1} 
  In \autoref{def:viscosity}\autoref{item:tf1},
  we may take 
    a test function $\phi\in \mathcal{X}$, where
  \begin{align*}
    \mathcal{X}  = \Big\{ \psi \in \Cb  \ : \ 
    \fL_1 \psi\in \Cb, \  \dt \psi\in\Cb  \Big\}.
  \end{align*}
  See \cite[\S10.1.2]{MR2597943} for first order PDEs and
  $\phi\in C^1(\T\times\X)$, and  the proof for \autoref{eq:viscosity} 
  and $\phi\in\mathcal{X}$ is a small modification of this.
  \npar In this paper we only consider functions $\mathcal{F}$ of the form
  \begin{align*}
   &(i) \quad \mathcal{F}(t,x,\ell) = F(\ell) - f(t,x); & &(ii)\quad
    \mathcal{F}(t,x,\ell) = b(t,x)\,\ell.
  \end{align*}
 \end{remark}
 \begin{definition}\label{def:comparison}
  The comparison principle holds for \autoref{eq:viscosity}
  if any subsolution $u^-$ and supersolution $u^+$
  satisfy $u^-(t,x)\leq u^+(t,x)$ for every  $(t,x)\in\Tb\times\X$.
 \end{definition}
  The next lemma shows that sufficiently regular viscosity solutions are classical.
 \begin{lemma}\label{lemma:classical}
  Assume~\ref{L:deg} and let $w$ be a viscosity solution of \autoref{eq:viscosity}.
  If the comparison principle holds for \autoref{eq:viscosity} and 
  \begin{align*}
  w\in\LC{2\sigma+\epsilon},\qquad\epsilon\in (0,1-2\sigma)
  \end{align*}
  then $\dt w\in\Cb$ and $w$ is a bounded classical solution of \autoref{eq:viscosity}.
 \end{lemma}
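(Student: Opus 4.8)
The plan is to show that $w$ is regular enough that the nonlocal term $\fL w$ is continuous and then to promote the viscosity formulation to a pointwise one. First I would exploit the splitting $\fL = \fLlow + \fLhigh$ introduced before \autoref{def:viscosity}, for a fixed $r\in(0,1)$. Since $w\in\LC{2\sigma+\epsilon}$ with $2\sigma+\epsilon\in(2\sigma,1)$, \autoref{prop:lap-lip} (applied with $\hdd = 2\sigma+\epsilon$) gives $\fL w(t,\cdot)\in\Hb{\epsilon}$ for each $t$, with a uniform bound in $t$; in particular $\fL w$ is bounded and spatially H\"older-continuous uniformly in time. The tail part $\fLhigh w$ is clearly continuous in $(t,x)$ whenever $w\in\Cbb$. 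The delicate point is joint continuity of $\fLlow w$ in $(t,x)$: the spatial modulus is controlled by \autoref{prop:lap-lip}, and continuity in $t$ follows from the continuity of $w$ on $\Tb\times\X$ together with dominated convergence, using the integrability $\int_{B_r}|z|^{2\sigma+\epsilon}\,\nu(dz)<\infty$ guaranteed by~\ref{L:deg}. Hence $\fL w\in\Cbb$ (at least in $\Cb$ on the open cylinder, which is what we need).

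Next I would show $\dt w$ is continuous and that the equation holds pointwise. With $\fL w$ now a \emph{known} continuous function, consider the linear (in fact, inhomogeneous transport-type if one freezes coefficients, but here just pointwise-defined) identity $\dt w = \mathcal{F}(t,x,\fL w(t,x))$; the right-hand side is a continuous function of $(t,x)$ by continuity of $\mathcal{F}$ and of $\fL w$. The standard argument is: for any $\phi\in\mathcal{X}$ (cf.~\autoref{rem:viscosity}\ref{rem:c1}) touching $w$ from above at $(t,x)$, the subsolution inequality together with $\fLhigh(w) = \fLhigh w$ (a fixed continuous function) and $\fLlow\phi(t,x)\to\fLlow w(t,x)$ as the test function flattens — more precisely, using that $\fLlow w(t,x)$ is already well-defined and that $\fLlow\phi(t,x) \ge \fLlow w(t,x)$ at a max point of $w-\phi$ — yields $\dt\phi(t,x) \le \mathcal{F}(t,x,\fL w(t,x))$, and symmetrically for supersolutions. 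Since $w$ is both, and since $\fL w$ is continuous, one concludes in the usual way (testing with smooth functions in $t$ only, freezing $x$) that $w(\cdot,x)$ is $C^1$ in $t$ with $\dt w(t,x) = \mathcal{F}(t,x,\fL w(t,x))$ pointwise. Continuity and boundedness of $\dt w$ on $\T\times\X$ then follow from the same property of the right-hand side, so $\dt w\in\Cb$.

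I expect the main obstacle to be the careful handling of the viscosity test-function inequalities when passing to the pointwise equation, specifically justifying that one may replace $\fLhigh(u^-)$ and $\fLlow\phi$ by $\fLhigh w$ and $\fLlow w$ at the contact point. The key technical input is that $w$ is regular enough ($\LC{2\sigma+\epsilon}$ with the \ref{L:deg} bound) for $\fLlow w$ to be a genuine, finite, continuous quantity, so that the test function only needs to be compared with $w$ in the \emph{local} (first-order, via $c\cdot\nabla\phi$ and the small-jump part) regime — and there the standard Jensen--Ishii / doubling machinery, or a direct argument using that $w$ itself can be used as a $\mathcal{X}$-test function once we know $\fL_1 w, \dt w\in\Cb$, closes the gap. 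The role of the comparison principle is that it guarantees \emph{uniqueness} of the viscosity solution, so the constructed classical solution (which, by \autoref{rem:viscosity}, is also a viscosity solution) coincides with $w$; this is what lets us conclude the regularity statement for $w$ itself rather than merely for some solution. The exponent constraint $\epsilon\in(0,1-2\sigma)$ is exactly what keeps $2\sigma+\epsilon\le1$ so that $\Hb{2\sigma+\epsilon}$ is a space of genuinely H\"older (sub-Lipschitz) functions, and simultaneously $2\sigma+\epsilon>2\sigma$ so that \autoref{prop:lap-lip} applies.
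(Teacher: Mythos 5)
Your first step (joint continuity of $\fL w$ via \autoref{prop:lap-lip} and dominated convergence) matches the paper. The gap lies in the second half, where the viscosity formulation is promoted to a pointwise identity. First, the mechanism you offer for removing the test function --- ``$\fLlow\phi(t,x)\ge\fLlow w(t,x)$ at a max point'' combined with monotonicity of $\mathcal{F}$ --- runs in the wrong direction: the subsolution inequality gives $\dt\phi\le\mathcal{F}\big(t,x,(\fLhigh w+\fLlow\phi)(t,x)\big)$, and since $\fLlow\phi\ge\fLlow w$ the right-hand side \emph{dominates} $\mathcal{F}(t,x,\fL w)$, so no inequality of the form $\dt\phi\le\mathcal{F}(t,x,\fL w)$ follows from this. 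The correct route (the paper's) keeps $r$ free and sends $r\to0$, using the \ref{L:deg}-estimate $|\fLlow\psi(x_0)|\le \tfrac K\epsilon r^\epsilon[\psi]_{\Holder{2\sigma+\epsilon}(B_1(x_0))}$ for both the test function and $w$ itself. Second, and more seriously, ``testing with smooth functions in $t$ only, freezing $x$'' is precisely the step that needs a proof: a $t$-only test function touching $w(\cdot,x_0)$ is not an admissible space--time test function, and at a given point there may be no admissible one at all. The paper supplies the missing device: the explicit test function $\phi(t,x)=|x-x_0|^{2\sigma+\epsilon}\big(1+\sup_{s}[w(s)]_{2\sigma+\epsilon}\big)+v(t)$, which lies in the extended class $\mathcal{X}$ of \autoref{rem:viscosity}\autoref{rem:c1}, touches $w$ at $(t_0,x_0)$ exactly because $w\in\LC{2\sigma+\epsilon}$, and satisfies $\fLlow\phi\to0$ as $r\to0$. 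You never construct such a function, so the passage from the viscosity inequalities to $\dt w=\mathcal{F}(t,x,\fL w)$ pointwise, and the claimed $C^1$-in-$t$ regularity, are asserted rather than proved.

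Relatedly, your account of the role of the comparison principle is off. You say it only serves to identify $w$ with ``the constructed classical solution,'' but your argument constructs no classical solution independently, so as written this use is vacuous. In the paper the comparison principle enters essentially and earlier: comparing $w$ with the barriers $w(t_0,\cdot)\pm L(t-t_0)$, $L=\|\mathcal{F}(t,x,\fL w(t,x))\|_\infty$, shows that $w$ is Lipschitz in $t$, hence a.e.\ $t$-differentiable by Rademacher and Fubini; the pointwise equation is then derived at such points with the test function above, and $\dt w\in\Cb$ follows because the right-hand side is continuous. Without this step (or an equivalent substitute, e.g.\ a rigorous reduction to the frozen-$x$ first-order problem together with its own uniqueness statement), the existence and continuity of $\dt w$ does not follow from what you wrote.
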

\begin{proof}
  \begin{step*}{We show $\fL w \in\Cb$} By \autoref{def:viscosity}, $w\in\Cbb$, and then since $w\in\LC{2\sigma+\epsilon}$, by \autoref{prop:lap-lip}
  $\fL w\in\LC{\epsilon}$.
  Let $(t_n,x_n)\to (t_0,x_0)$, and note that for every $(t,x)\in\T\times\X$,
   \begin{align*}
       \big|w(t,x+z)-w(t,x)\big|\leq 2\|w\|_\infty\mathbbm{1}_{B_1^c}(z) + \sup_{s\in\T}[w(s)]_{2\sigma+\epsilon}|z|^{2\sigma+\epsilon}\mathbbm{1}_{B_1}(z).
   \end{align*}
  The function on the right-hand side is $\nu$-integrable in $z$.
   Then by \ref{L:deg}, the Lebesgue dominated convergence theorem, and the continuity of $w$, we get
   \begin{multline*}
       \lim_{n\to\infty}\fL w(t_n,x_n) = \lim_{n\to\infty}\int_\X w(t_n,x_n+z)-w(t_n,x_n)\,\nu(dz)\\
       =\int_\X w(t_0,x_0+z)-w(t_0,x_0)\,\nu(dz) = \fL w(t_0,x_0).
   \end{multline*}
  \end{step*}

\begin{step*}{We show $w$ is a.e.~$t$-differentiable} Let $t_0\in\T$ be fixed  and define
  \begin{align*}
   u^{\pm}(t,x) = w(t_0,x)\pm L(t-t_0),\qquad L=\|\mathcal{F}\big(t,x,\fL w(t,x)\big)\|_\infty.
  \end{align*}
  Then $u^{+}$ and $u^-$ are respectively a viscosity supersolution and a subsolution of \autoref{eq:viscosity}  for $t\geq t_0$.
  Therefore, by the comparison principle,
  \begin{align*}
   u^-(t,x) \leq w(t,x) \leq u^+(t,x),\qquad\text{for every $(t,x)\in[t_0,T]\times\X$}.
  \end{align*}
   Hence $|w(t_0,x)-w(t,x)|\leq L|t-t_0|$, and $w$ is $t$‑Lipschitz.
  Thus, by  the theorems of Rademacher~\cite[\S5.8~Theorem~6]{MR2597943}
  and  Fubini~\cite[Theorem~7.6.5]{MR2267655},
  we find that $w$ is a.e.~$t$‑differentiable in $\T\times\X$.
  \end{step*}
  \begin{step*}{We show $\dt w\in \Cb$} Suppose $w$ is $t$‑differentiable at $(t_0,x_0)\in\T\times\X$.
  Consider
  \begin{align*}
     \phi(t,x) = |x-x_0|^{2\sigma +\epsilon}\big(1+\sup_{s\in\T}[w(s)]_{2\sigma+\epsilon}\big) +v(t),\quad \phi (t)\in C^{2\sigma+\epsilon}(B_1(x_0)),
  \end{align*}
  where $v\in C^1(\T)$ is the function we get from \cite[\S10.1.2, Lemma]{MR2597943} (only for $t$ variable at fixed $x_0$) such that
$v(t_0) = w(t_0,x_0)$ and $w(t,x_0)-v(t)$ attains a maximum at~$t_0$. 
  Then, $w-\phi$ has a strict local maximum at $(t_0,x_0)$ and  $\dt (w-\phi)(t_0,x_0)=0$.
  By the definition of a viscosity subsolution and \autoref{rem:viscosity}\autoref{rem:c1}, since $\phi \in \mathcal{X}$,
  \begin{equation}\label{eq:esti_subsolution}
   \dt w (t_0,x_0)= \dt \phi (t_0,x_0) \leq \mathcal{F}\big(t_0,x_0,\big(\fL_r \phi + \fL^r w\big) (t_0,x_0) \big),
  \end{equation}
  for every $r\in (0,1)$.
  By \ref{L:deg}, for every  $\psi \in C^{2\sigma+\epsilon}(B_1(x_0))$,
 \begin{align*}
 \big|\fL_r \psi (x_0)\big| & \leq [\psi]_{C^{2\sigma + \epsilon}(B_1(x_0))} \int_{B_r} |z|^{2\sigma + \epsilon} \nu(dz)   \leq \frac{K}{\epsilon} r^\epsilon  \, [\psi]_{C^{2\sigma + \epsilon}(B_1(x_0))} .
 \end{align*}
 Note that $(\fL^r w -\fL w)= \fL_r w$.
 Therefore, $\lim\limits_{r\to 0} \big(\fL_r \phi + \fL^r w\big) (t_0,x_0) = \fL w(t_0,x_0)$, as $\phi(t_0), w(t_0) \in C^{2\sigma+\epsilon}(B_1(x_0))$. By letting $r\to 0$ in \eqref{eq:esti_subsolution} we find
 \begin{align*}
   \dt w (t_0,x_0) \leq \mathcal{F}\big(t_0,x_0, \fL w (t_0,x_0) \big).
  \end{align*}
   A similar argument for supersolutions gives the opposite inequality.
  Then by a.e.~$t$‑differentiability we get
  \begin{align*}
  \dt w(t,x) = \mathcal{F}\big(t,x,\fL w(t,x)\big)\quad\text{a.e.~in $\T\times\X$.}
  \end{align*}
  Since $(t,x)\mapsto \mathcal{F}(t,x,\fL w (t,x))\in\Cb$, we obtain $\dt w\in\Cb$ and so $w$ is a bounded classical solution of \autoref{eq:viscosity}.\qedhere
  \end{step*}
 \end{proof} 
 
 \section{Proof of \autoref{thm:deg}---well-posedness}\label{section:hjb-fp}

We first recall the assumptions and the main result from \cite{CJK}, and then show that the assumptions of this theory hold for our degenerate problem.

\subsection{General well-posedness theory from \texorpdfstring{\cite{CJK}}{[CJK]}}
 Let 
  \begin{align*}
      \HJcD &= 
      \big\{\text{$u\in\Cbb$ is a bounded classical solution of \autoref{eq:hjb}}\\ &\qquad\text{with data $(f,\uT)=\big(\cF(m),\cG(m(T))\big)$}:  m\in\CaPX{\frac12}\big\}    
  \end{align*}
 and consider the following conditions:   \begin{description*}\vspace{0.33\baselineskip}
   \item[(S1)\label{R1}] For every $m\in\CaPX{\frac12}$ there exists a
   bounded classical solution $u$ of \autoref{eq:hjb} with data $(f,\uT)=\big(\cF(m),\cG(m(T))\big)$.
   \item[(S2)\label{R2}] If $\{u_n,u\}_{n\in\N}\subset \HJcD$
   and $\lim\limits_{n\to\infty}\|u_n- u\|_\infty = 0 $,
   then $\fL u_n(t) \to \fL u(t)$ uniformly on compact sets in $\X$ for every $t\in\T$.
   \item[(S3)\label{R3}] There is $\KHJ\geq0$ such that 
   $\|F'(\fL u)\|_{\infty}\leq \KHJ$ for every $u\in \HJcD$.
  \item[(S4)\label{U1}]
    It holds
   $\{\dt u,\,\fL u : u\in \HJcD\}\subset\Cb$.
  \item[(S5)\label{U2}] For each $b\in\{F'\big(\fL u\big): u\in \HJcD\}\cap\Cb$  
   and initial data $m_0\in\PX$ there exists at most one very weak solution of \autoref{eq:fp}.
 \end{description*}

 
 \begin{theorem}[{\cite[Theorem 2.9]{CJK}}]\label{thm:main}
  Assume~\ref{L:deg},~\ref{a:F1},~\ref{a:m}.
  If in addition
  \begin{enumerate}
      \item \ref{a:fg1}, \ref{R1}, \ref{R2}, \ref{R3} hold,
      then there exists a classical--very weak  solution of \autoref{eq:mfg};
      \item \ref{a:F2},~\ref{a:fg2}, \ref{U1}, \ref{U2} hold,
      then \autoref{eq:mfg} has at most one classical--very weak solution.
  \end{enumerate}
\end{theorem}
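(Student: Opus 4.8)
The two assertions call for the two classical strategies of the PDE approach to mean field games: a set-valued fixed-point argument for existence and a Lasry--Lions monotonicity identity for uniqueness. For \textbf{existence}, I would realize a solution as a fixed point of the MFG map. Fix the Lyapunov function $V$ from \autoref{lemma:levy-lyapunov} (so $\|\fL V\|_\infty<\infty$) and let $\mathcal K\subset\CaPX{\frac12}$ be the set of $m$ with $m(0)=m_0$, with $\frac12$-H\"older-in-time seminorm at most $C_0$, and with $\sup_t\langle m(t),V\rangle\le R$; by Prokhorov's theorem (tightness from $V$) and Arzel\`a--Ascoli, $\mathcal K$ is convex and compact in $\CPX$. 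Given $m\in\mathcal K$, \ref{R1} provides a classical solution $u$ of \eqref{eq:hjb} with data $(\cF(m),\cG(m(T)))$, which is unique by the comparison principle for \eqref{eq:hjb} (\autoref{thm:hjb-viscosity}), and \ref{R3} gives $\|F'(\fL u)\|_\infty\le\KHJ$; by \cite[Theorem 6.6]{CJK} the FP equation \eqref{eq:fp} with coefficient $b:=F'(\fL u)$ and datum $m_0$ has a nonempty set $\Phi(m)$ of very weak solutions, which is convex since \eqref{eq:fp} is linear in $m$. The $\KHJ$-bound on $b$ and the bound $\|\fL V\|_\infty<\infty$, inserted into the weak formulation of \eqref{eq:fp}, control $\langle m(t),V\rangle$ and the time-H\"older seminorm of solutions, so for suitable $C_0,R$ one gets $\Phi(\mathcal K)\subset\mathcal K$.

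It then remains to check that $\Phi$ has closed graph, after which the Kakutani--Fan--Glicksberg theorem yields a fixed point. So let $m_n\to m$ in $\CPX$ and $\tilde m_n\in\Phi(m_n)$ with $\tilde m_n\to\tilde m$. By \ref{a:fg1}, $\cF(m_n)\to\cF(m)$ in $\Cb$ and $\cG(m_n(T))\to\cG(m(T))$ in $C_b(\X)$; the associated HJB solutions $u_n$ satisfy uniform bounds and uniform moduli of continuity, inherited from those of the data via the comparison principle (comparing $u_n$ with translates), so along a subsequence $u_n\to u$ uniformly by Arzel\`a--Ascoli, and by stability of viscosity solutions together with uniqueness of the classical HJB solution for the limit data, $u\in\HJcD$. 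Then \ref{R2} gives $\fL u_n(t)\to\fL u(t)$ locally uniformly, hence $b_n:=F'(\fL u_n)\to b:=F'(\fL u)$ locally uniformly and boundedly; passing to the limit in $\tfrac{d}{dt}\langle\tilde m_n,\phi\rangle=\langle\tilde m_n,b_n\fL\phi\rangle$ (using the uniform tightness built into $\mathcal K$ to upgrade local to global convergence) shows $\tilde m\in\Phi(m)$. A fixed point $m=\Phi(m)$, paired with the corresponding HJB solution, is a classical--very weak solution of \eqref{eq:mfg}.

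For \textbf{uniqueness}, let $(u_1,m_1)$ and $(u_2,m_2)$ be two solutions; by \ref{U1}, $u_i\in\Cbb$ with $\fL u_i,\partial_t u_i\in\Cb$, so each $u_i(t)$ is an admissible test function for the weak formulation of \eqref{eq:fp}. A product-rule lemma then gives, with $\bar u=u_1-u_2$, $\bar m=m_1-m_2$, $\ell_i=\fL u_i$,
\begin{align*}
  \tfrac{d}{dt}\langle\bar u,\bar m\rangle
  &= -\langle\cF(m_1)-\cF(m_2),\,m_1-m_2\rangle \\
  &\quad + \big\langle F(\ell_2)-F(\ell_1)+F'(\ell_1)(\ell_1-\ell_2),\,m_1\big\rangle \\
  &\quad + \big\langle F(\ell_1)-F(\ell_2)-F'(\ell_2)(\ell_1-\ell_2),\,m_2\big\rangle ,
\end{align*}
where the last two pairings are of nonnegative integrands --- Bregman divergences of the convex $F$ (by \ref{a:F2}) --- against the nonnegative measures $m_i$. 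Integrating over $[0,T]$, using $m_1(0)=m_2(0)=m_0$ and $u_i(T)=\cG(m_i(T))$, and invoking the monotonicity of $\cF$ and $\cG$ (\ref{a:fg2}), one concludes that the time-integral of those nonnegative pairings vanishes. Hence for $i=1,2$ the identity $F(\ell_2)=F(\ell_1)+F'(\ell_1)(\ell_2-\ell_1)$ holds $m_i$-a.e.; since $F$ is convex this forces $F$ to be affine between $\ell_1$ and $\ell_2$ there, so $b_1:=F'(\ell_1)=F'(\ell_2)=:b_2$ holds $m_1$- and $m_2$-a.e. Therefore $m_1$ and $m_2$ are both very weak solutions of \eqref{eq:fp} with the \emph{same} coefficient $b:=b_1\in\{F'(\fL u):u\in\HJcD\}\cap\Cb$ and datum $m_0$ (only the values of the coefficient on the supports enter the weak formulation), so \ref{U2} gives $m_1=m_2=:m$; then $u_1,u_2$ both solve \eqref{eq:hjb} with data $(\cF(m),\cG(m(T)))$, and the comparison principle forces $u_1=u_2$.

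I expect the \textbf{main obstacles} to be, on the existence side, the $\CPX$-compactness in the absence of moment assumptions (handled by the Lyapunov function of \autoref{lemma:levy-lyapunov}) and the closed-graph step, which must combine the stability \ref{R2} of the fully nonlinear HJB equation with stability of measure-valued solutions of the degenerate FP equation under varying coefficients; and, on the uniqueness side, justifying the duality identity $\tfrac{d}{dt}\langle\bar u,\bar m\rangle$ rigorously for solutions of such low regularity --- a merely classical HJB value paired with a measure $m$ --- and extracting from the vanishing Bregman term, with $F$ only convex, the equality $F'(\ell_1)=F'(\ell_2)$ on the supports that is precisely what condition \ref{U2} can act upon.
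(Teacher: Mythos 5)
This theorem is not proved in the present paper at all: it is quoted from \cite[Theorem 2.9]{CJK}, and the paper's own work consists of verifying \ref{R1}--\ref{U2} for the degenerate example. So there is no line-by-line comparison to make; what can be said is that your reconstruction is the natural proof of the abstract result and deploys the hypotheses in exactly their intended roles: a set-valued (Kakutani--Fan--Glicksberg) fixed point over a tight, convex, compact set of measure flows, with \ref{R1}/\ref{R3} producing the coefficient $b=F'(\fL u)$, \cite[Theorem 6.6]{CJK} giving nonempty convex values, \ref{a:fg1}, the comparison estimate and \ref{R2} giving the closed graph; and, for uniqueness, the duality identity tested with $u_1-u_2$ (admissible by \ref{U1}), a vanishing Bregman term forcing $F'(\fL u_1)=F'(\fL u_2)$ on the supports, and then \ref{U2} plus the comparison principle. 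That final reduction--same coefficient on the supports, so both measures solve \eqref{eq:fp} with one fixed $b$--is precisely what \ref{U2} is designed for.

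There is, however, one step that does not close as you wrote it: the sign bookkeeping in the uniqueness argument. Your identity for $\tfrac{d}{dt}\langle\bar u,\bar m\rangle$ is correct, but with $F$ \emph{convex} and the HJB written as $-\dt u-F(\fL u)=\cF(m)$ (a gain-maximisation problem), the two Bregman integrands $D_1,D_2\ge0$ enter with a \emph{plus} sign, so that after integrating in time and using $\bar m(0)=0$, $u_i(T)=\cG(m_i(T))$ one gets
\begin{align*}
  0\le\int_0^T\big[\langle D_1,m_1\rangle+\langle D_2,m_2\rangle\big]\,dt
  &=\big\langle\cG(m_1(T))-\cG(m_2(T)),\,\bar m(T)\big\rangle\\
  &\quad+\int_0^T\big\langle\cF(m_1)-\cF(m_2),\,\bar m\big\rangle\,dt .
\end{align*}
If ``monotone'' in \ref{a:fg2} is read with the usual Lasry--Lions sign $\langle\cF(m_1)-\cF(m_2),m_1-m_2\rangle\ge0$ (which is what your phrasing implicitly assumes), this says ``nonnegative equals nonnegative'' and nothing is forced to vanish, so the key step fails. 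The argument closes only when the monotonicity is taken in the sense adapted to the maximisation convention of \autoref{eq:mfg}, i.e.\ $\langle\cF(m_1)-\cF(m_2),m_1-m_2\rangle\le0$ and likewise for $\cG$, which is how the assumption of \cite{CJK} must be understood (the present paper never spells the sign out). Under that convention the right-hand side above is nonpositive, both Bregman integrals vanish, and your subsequent deduction ($F$ affine between $\fL u_1$ and $\fL u_2$ on the supports, hence $F'(\fL u_1)=F'(\fL u_2)$ there, hence \ref{U2} applies and finally comparison gives $u_1=u_2$) is exactly right. So: correct architecture on both existence and uniqueness, but the sign of the monotonicity must be made explicit, since with the opposite (standard) sign the vanishing step--and indeed uniqueness itself--would not follow.
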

 In the next section we study the HJB equation \eqref{eq:hjb} and prove that conditions \ref{R1}--\ref{U1} hold under the assumptions of \autoref{thm:deg}. Condition~\ref{R1} describes existence of solutions of the HJB equation \eqref{eq:hjb} 
 (which are unique by 
 \autoref{thm:hjb-viscosity} below).
 Conditions~\ref{R2}, \ref{R3}, \ref{U1} describe various (related) properties of solutions of \autoref{eq:hjb}.
 
 Then in \autoref{pf:fp} we show that condition \ref{U2}---uniqueness of solutions---holds for the FP equation \eqref{eq:fp}. Existence of solutions is proved in \cite[Theorem 6.6]{CJK}
 
 \subsection{Estimates on the HJB equation \texorpdfstring{\eqref{eq:hjb}}{(3)}}
 We recall the following uniqueness, stability, and existence result for viscosity solutions of \autoref{eq:hjb}.
 \begin{theorem}
 [{\cite[Theorem 5.3]{CJK}}]
 \label{thm:hjb-viscosity}
  Assume~\ref{L:deg}
  ,~\ref{a:F1}, and $(f,\uT)$
  are bounded and continuous.
   \begin{enumerate}
\item The comparison principle (see \autoref{def:comparison}) holds for \autoref{eq:hjb}.
\item\label{item:cp} Let $u_1,u_2$ be viscosity solutions of \autoref{eq:hjb}
  with bounded uniformly continuous data $(f_1,\unT{1})$, $(f_2,\unT{2})$, respectively.
  Then for every $t\in\Tb$,
    \begin{align*}\hspace{.5\mathindent}
     \|u_1(t)-u_2(t)\|_{\infty} \leq (T-t) \|f_1-f_2\|_{\infty} + \|\unT{1} - \unT{2}\|_{\infty}.
    \end{align*} 
\item\label{item:vs-ex} There exists a unique viscosity solution of \autoref{eq:hjb}.   
\end{enumerate}
  \end{theorem}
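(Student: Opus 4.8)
The plan is to reduce the entire statement to the comparison principle, where the real difficulty lies. Reversing time via $t\mapsto T-t$ turns~\eqref{eq:hjb} into an initial-value problem of the form~\eqref{eq:viscosity} with a nonlinearity that is continuous and nondecreasing in $\ell$ by~\ref{a:F1}, so \autoref{def:viscosity} applies. Granting comparison, uniqueness in~\autoref{item:vs-ex} is immediate. For existence I would run Perron's method between the barriers, constant in $x$ and affine in $t$,
\[
 \underline u(t,x)=\inf_{\X}\uT+\big(F(0)+\inf_{\T\times\X} f\big)(T-t),\qquad
 \overline u(t,x)=\sup_{\X}\uT+\big(F(0)+\sup_{\T\times\X} f\big)(T-t),
\]
which are, respectively, a bounded viscosity sub- and supersolution of~\eqref{eq:hjb} (recall that $\fL$ annihilates functions of $t$ alone): the supremum of all subsolutions lying below $\overline u$ is a subsolution, and at a point where it failed to be a supersolution it could be bumped up, contradicting maximality; one thus obtains a solution whose upper and lower semicontinuous envelopes are a sub- and a supersolution, and comparison forces these to coincide, giving a bounded continuous solution between $\underline u$ and $\overline u$. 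For the quantitative bound~\autoref{item:cp} I would note that, with $c=\|f_1-f_2\|_\infty$ and $d=\|\unT1-\unT2\|_\infty$, the function $u_2+c(T-t)+d$ is a supersolution of~\eqref{eq:hjb} with data $(f_1,\unT1)$ and $u_2-c(T-t)-d$ a subsolution, so that comparison directly yields $\|u_1(t)-u_2(t)\|_\infty\le c(T-t)+d$.

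For the comparison principle itself I would argue by contradiction via doubling of variables. Assuming $\sup(u^--u^+)>0$ for a bounded sub-/supersolution pair, I would examine
\[
 \Phi(t,x,y)=u^-(t,x)-u^+(t,y)-\tfrac{1}{2\epsilon}|x-y|^2-\eta\big(V(x)+V(y)\big)-\text{(a penalty in $t$)},
\]
with $V$ a Lyapunov function such that $\|\fL V\|_\infty<\infty$, provided by~\autoref{lemma:levy-lyapunov}. The Lyapunov penalty is precisely what makes $\Phi$ attain a maximum on the unbounded domain while keeping the nonlocal contribution of the penalty bounded, and the time penalty localizes this maximum either at the time where the ordered data gives a contradiction or at a time where the equations can be used. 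At the maximum point $(\hat t,\hat x,\hat y)$ --- where, as usual, $|\hat x-\hat y|^2/\epsilon\to0$ and $\eta\big(V(\hat x)+V(\hat y)\big)\to0$ in the limit --- I would invoke the sub- and supersolution inequalities with the smooth penalty as a test function, splitting $\fL=\fLlow+\fLhigh$ for small $r\in(0,1)$ so that the singular part $\fLlow$ acts on the smooth penalty and the far part $\fLhigh$ on $u^-$ (resp.~$u^+$) directly. Subtracting the two inequalities and using the monotonicity of $F$ and of $\fL$, together with the favorable sign from the time penalty, should give the desired contradiction after successively letting $r,\epsilon,\eta\to0$.

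I expect the nonlocal term to be the main obstacle. On the one hand, $\fLlow$ applied to the quadratic penalty is bounded by $\epsilon^{-1}|\hat x-\hat y|\int_{B_r}|z|\,\nu(dz)+\tfrac12\epsilon^{-1}\int_{B_r}|z|^2\,\nu(dz)$ and $\fLlow V$ by $\int_{B_r}|z|\,\nu(dz)$, both of which vanish as $r\to0$ for fixed $\epsilon$ --- this is where the order-less-than-one condition~\ref{L:deg}, i.e.~$\int_{B_1}(1\wedge|z|)\,\nu(dz)<\infty$, is essential, and no counterpart of the usual second-order correction is needed. On the other hand, for the far part the maximum inequality $\Phi(\hat t,\hat x+z,\hat y+z)\le\Phi(\hat t,\hat x,\hat y)$ gives, for every $z$,
\[
 \big(u^-(\hat x+z)-u^-(\hat x)\big)-\big(u^+(\hat y+z)-u^+(\hat y)\big)\le\eta\big(V(\hat x+z)-V(\hat x)+V(\hat y+z)-V(\hat y)\big),
\]
so integrating this \emph{difference} over $B_r^c$ (which sidesteps the individual divergences $\nu(B_r^c)\to\infty$) yields $\fLhigh u^-(\hat x)-\fLhigh u^+(\hat y)\le\eta\big(\fLhigh V(\hat x)+\fLhigh V(\hat y)\big)\le 2\eta\|\fL V\|_\infty+o_r(1)$; this is the source of the correct sign needed to exploit the monotonicity of $F$. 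Once comparison is in hand, parts~\autoref{item:cp} and~\autoref{item:vs-ex} follow as above, the only non-routine point being the verification that Perron's method applies to nonlocal parabolic equations of this type, which is by now standard.
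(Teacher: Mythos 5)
This paper does not actually prove \autoref{thm:hjb-viscosity}; it is quoted from \cite{CJK} (Theorem 5.3), so there is no in-paper proof to compare against. The closest in-paper analogue is the comparison argument for the linear dual equation (\autoref{lem:fp-dual-comparison}), and your skeleton is exactly that machinery: time reversal to the form \eqref{eq:viscosity}, doubling of variables with the Lyapunov penalty of \autoref{lemma:levy-lyapunov}, splitting $\fL=\fLlow+\fLhigh$ with the singular part acting on the smooth penalty and the far part controlled through the maximum-point inequality, then Perron between barriers for existence. Your reductions of part (\textit{ii}) (comparing $u_1$ with $u_2\pm\big(\|f_1-f_2\|_\infty(T-t)+\|\unT1-\unT2\|_\infty\big)$) and of uniqueness in part (\textit{iii}) to the comparison principle are correct.

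Two steps, however, need more than you wrote. First, after subtracting the two viscosity inequalities you must pass from the ordering of the nonlocal arguments, $A^-\le A^+ + s$ with $s=o_r(1)+O(\eta)$ (where $A^\mp$ denote the full arguments of $F$ in the sub- and supersolution inequalities), to smallness of $F(A^-)-F(A^+)$. Monotonicity alone does not give this: under \ref{a:F1} the nonlinearity need not be uniformly continuous on $\R$ (e.g.\ $F(z)=e^z$ or the power nonlinearity of \autoref{tab:L-transform} and \eqref{eq:mfg-ex}), and $\fLhigh u^+(\hat y)$ is not bounded uniformly as $r\to0$, so $F(A^++s)-F(A^+)$ can be large. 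One must use the max/min-point inequalities once more to confine the relevant arguments of $F$ to a compact interval of size $O(1/\epsilon)$ uniformly in $r$ (or split into the case $A^+\ge A^-$, where the difference is nonpositive, and the case $A^+<A^-$, where both arguments lie in such an interval), and then send $r$ and the penalty parameters to zero \emph{before} $\epsilon$; this is precisely where the mere continuity plus monotonicity of $F$ has to be exploited carefully. Second, your Perron barriers are constant in $x$, so they do not force the constructed solution to attain the terminal datum: for comparison to apply to the upper and lower envelopes of the Perron candidate one needs these envelopes to satisfy the data inequality at $t=T$, which requires additional barriers pinching to $\uT$ there, e.g.\ $\uT_\delta(x)\pm\big(C_\delta(T-t)+\delta\big)$ with $\uT_\delta$ a mollification and $C_\delta$ controlling $F(\pm\|\fL \uT_\delta\|_\infty)+\|f\|_\infty$ (here \ref{L:deg} guarantees $\|\fL \uT_\delta\|_\infty<\infty$). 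Both repairs are standard, but as written these are genuine gaps rather than routine omissions.
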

 By restricting the range to which $(f,g)$ belongs,
 we can then derive the desired properties of solutions.
   \begin{theorem}
   \label{thm:hjb-lipschitz}   
    Assume \ref{L:deg}, \ref{a:F1} and $(f,\uT)\in \cD_0(\hd,M)$ (as in \ref{D:deg}).
    If $u$ is a viscosity solution of \autoref{eq:hjb}, then
    \begin{enumerate}
    \item\label{item:lipschitz}
    $u\in \LC{\hd}$ with 
    $\max\{\|u(t)\|_\infty,\,[u(t)]_\hd\}\leq M(T-t+1)$,
    and \mbox{$\fL u\in\LC{\hd-2\sigma}$} with
    \begin{align*}\hspace{.5\mathindent} 
    \|\fL u(t)\|_{\hd-2\sigma}\leq 4\Big(\frac{K}{\hd-2\sigma}+\nu\big(B_1^c\big)\Big)M(T-t+1);
    \end{align*}
   \item\label{item:unique} 
    $u$ is a bounded classical solution of \autoref{eq:hjb} and 
    \begin{align*}\hspace{.5\mathindent} 
    \dt u, \fL u \in\Cb;
    \end{align*}
   \item\label{item:limit} if $u_n$ are viscosity solutions of \autoref{eq:hjb},
     with data $(f_n, \unT{n})\in \cD_0(\hd,M)$
     and we have \mbox{$\lim\limits_{n\to\infty}\|u_n-u\|_{\infty}=0$},
     then $\fL u_n(t) \to\fL u(t)$ uniformly on compact sets in $\X$ for every $t\in\T$.
  \end{enumerate}
 \end{theorem}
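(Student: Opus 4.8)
The plan is to read off all three parts of \autoref{thm:hjb-lipschitz} from machinery already in place: the comparison principle and the data-contraction estimate of \autoref{thm:hjb-viscosity}, the mapping property $\fL:\Hb{\hdd}\to\Hb{\hdd-2\sigma}$ of \autoref{prop:lap-lip}, and the regularity bootstrap of \autoref{lemma:classical}. There is no smoothing effect to exploit; instead every bit of regularity of $u$ is inherited from the data $(f,\uT)\in\cD_0(\hd,M)$. I would establish \autoref{item:lipschitz} first, then \autoref{item:unique}, then \autoref{item:limit}, since each relies on the previous. The argument is essentially bookkeeping, and the one genuinely delicate step of the whole paper lies elsewhere, in the Fokker--Planck uniqueness of \autoref{thm:fp-uniqueness}.

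For \autoref{item:lipschitz}, the uniform bound $\|u(t)\|_\infty\le M(T-t+1)$ follows from \autoref{thm:hjb-viscosity}\autoref{item:cp} by comparison with the affine-in-$t$ solution built from $0$ (normalizing $F(0)=0$, which affects neither $F'$ nor any H\"older seminorm and at worst enlarges $M$ by $|F(0)|$). For the seminorm I would use that $\fL$, and hence \autoref{eq:hjb}, is translation invariant: for $h\in B_1$ the shift $u(\cdot,\cdot+h)$ is again a viscosity solution, now with data $(f(\cdot,\cdot+h),\uT(\cdot+h))$, which lies within $M|h|^\hd$ of $(f,\uT)$ in sup norm because $[f(t)]_\hd,[\uT]_\hd\le M$; \autoref{thm:hjb-viscosity}\autoref{item:cp} then gives $|u(t,x)-u(t,x+h)|\le (T-t+1)M|h|^\hd$, i.e.~$[u(t)]_\hd\le M(T-t+1)$. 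Feeding $u(t)\in\Hb{\hd}$ into \autoref{prop:lap-lip} with $\hdd=\hd$ and adding \eqref{eq:lap-lip1} and \eqref{eq:lap-lip2} (bounding $3\le 4$ to match the stated constant) yields the claimed estimate for $\|\fL u(t)\|_{\hd-2\sigma}$, so $\fL u\in\LC{\hd-2\sigma}$.

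For \autoref{item:unique}, reverse time: $w(t,x):=u(T-t,x)$ is a viscosity solution of the forward problem \autoref{eq:viscosity} with $\mathcal{F}(t,x,\ell)=F(\ell)+f(T-t,x)$, a case covered by \autoref{rem:viscosity}. The comparison principle of \autoref{thm:hjb-viscosity} transfers to this time-reversed problem, and by \autoref{item:lipschitz} we have $w\in\LC{2\sigma+\epsilon}$ for a suitable $\epsilon\in(0,1-2\sigma)$ (take $\epsilon=\hd-2\sigma$ when $\hd<1$, and when $\hd=1$ use the embedding $\Hb{1}\subset\Hb{2\sigma+\epsilon}$). Then \autoref{lemma:classical} shows that $w$---hence $u$---is a bounded classical solution with $\dt w\in\Cb$, and its first step already gives $\fL w\in\Cb$; undoing the time reversal yields $\dt u,\fL u\in\Cb$.

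For \autoref{item:limit}, put $v_n=u_n-u$, so $\|v_n\|_\infty\to0$ while $\sup_n[v_n(t)]_\hd\le 2M(T+1)$ by \autoref{item:lipschitz} (and lower semicontinuity of the seminorm for $u$). Fix $t\in\T$ and pick $\hdd\in(2\sigma,\hd)$. The elementary interpolation $[v_n(t)]_{\hdd}\le (2\|v_n(t)\|_\infty)^{1-\hdd/\hd}[v_n(t)]_\hd^{\hdd/\hd}$ forces $[v_n(t)]_{\hdd}\to0$, so \autoref{prop:lap-lip} with exponent $\hdd$ gives $\|\fL v_n(t)\|_\infty\to0$, i.e.~$\fL u_n(t)\to\fL u(t)$ uniformly on $\X$---in particular on compact sets. (Alternatively, split $\fL=\fLlow+\fLhigh$ for small $r$, bound the near part by $\tfrac{2K}{\hd-2\sigma}r^{\hd-2\sigma}M(T+1)$ uniformly in $n$ and $x$ via \ref{L:deg} and the far part by $2\|v_n\|_\infty\,\nu(B_r^c)$, then let $n\to\infty$ and afterwards $r\to0$.) The points that require the most care are the normalization of $F(0)$, the endpoint $\hd=1$ in \autoref{item:unique}, and the ordering of the limits in \autoref{item:limit}; none of these is a real obstacle.
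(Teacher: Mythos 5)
Your argument is correct and, for parts (i) and (ii), is essentially the paper's proof: translation invariance of \autoref{eq:hjb} combined with the contraction estimate of \autoref{thm:hjb-viscosity}\autoref{item:cp} gives the sup and H\"older bounds, \autoref{prop:lap-lip} then gives the bound on $\fL u$, and \autoref{lemma:classical} (applied after the time reversal that the paper leaves implicit) gives (ii); your explicit treatment of the endpoint $\hd=1$ via the embedding $\Hb{1}\subset\Hb{2\sigma+\epsilon}$ and your remark on normalizing $F(0)=0$ address details the paper passes over silently. For part (iii) you take a slightly different route: the paper uses the equi-H\"older bound from (i) together with the Arzel\`a--Ascoli theorem and pointwise convergence of $\fL u_n(t,x)$ (by dominated convergence) to get compact-uniform convergence, whereas you interpolate the $\Holder{\hdd}$-seminorm of $u_n-u$, for $\hdd\in(2\sigma,\hd)$, between its vanishing sup norm and its uniformly bounded $\Holder{\hd}$-seminorm and then invoke the bound \eqref{eq:lap-lip1} of \autoref{prop:lap-lip}. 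This is equally valid (your fallback splitting of $\fL$ into near and far parts works too) and in fact yields uniform convergence of $\fL u_n(t)$ on all of $\X$, slightly more than the compact-uniform convergence stated in (iii) and needed for \ref{R2}.
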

 \begin{proof*}
 \begin{part*}\label{part:hjb2}
  Let $y \in\X $ and define $\widetilde u(t,x) = u(t, x +y)$.
  Notice that $\widetilde u$ is a viscosity solution of \autoref{eq:hjb} with data
  $(\widetilde f,\widetilde \uT)$, where $\widetilde f(x) = f(x+y)$ and $\widetilde \uT(x) = \uT(x+y)$.
  Hence by \autoref{thm:hjb-viscosity}\autoref{item:cp} for every~$(t,x,y)\in\Tb\times\X\times\X$ we get
  \begin{align*}
   |u(t,x+y)-u(t,x)| & \leq  (T-t)\| f - \widetilde f\|_\infty + \| \uT - \widetilde \uT\|_\infty \\
   & \leq |y|^\hd\big((T-t)[f]_\hd + [\uT]_\hd\big) \leq M(T-t+1)|y|^\hd.
  \end{align*}
   Since 
   $\|u(t)\|_\infty \leq (T-t)\|f\|_\infty + \|\uT\|_\infty\leq M(T-t+1)$ by \autoref{thm:hjb-viscosity}\autoref{item:cp} again, it follows that
  $u\in \LC{\hd}$.
  Then by \ref{L:deg} and \autoref{prop:lap-lip} we find that 
  \begin{align*}
   \|\fL u(t)\|_{\hd-2\sigma} \leq 4\Big(\frac{K}{\hd-2\sigma}+\nu\big(B_1^c\big)\Big)M(T-t+1).
  \end{align*}
  \vspace{-\baselineskip}
  \end{part*}
  \begin{part*}
   It follows from \autoref{part:hjb2} and \autoref{lemma:classical}
   that $u$ is a bounded classical solution and $\dt u,\fL u\in\Cb$.
 \end{part*}
  \begin{part*}
   By \autoref{part:hjb2} and the Arzel\`a--Ascoli  theorem,
  for every $t\in\T$ there exist a subsequence $\{u_{n_k}\}$ and a function $v\in C_b(\X)$, such that
  $\fL u_{n_k}(t) \to v$ uniformly on compact sets in $\X$.
  On the other hand, by \autoref{part:hjb2} and the Lebesgue dominated convergence theorem, $\lim\limits_{n\to \infty} \fL u_n(t,x) = \fL u (t,x)$
  for every $(t,x)\in\T\times\X$.
  Hence we find $\fL u_{n_k}(t) \to \fL u(t)$ uniformly on compact sets in $\X$ for every $t\in\T$.\qedhere 
  \end{part*}
 \end{proof*}
 
 \begin{corollary}\label{cor:hjb-l1-s}
  Assume~\ref{L:deg},~\ref{D:deg}, and~\ref{a:F1}.
   Then  conditions \ref{R1}, \ref{R2}, \ref{R3}, and \ref{U1} are satisfied.
 \end{corollary}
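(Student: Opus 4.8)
The plan is to reduce everything to the two preceding results, \autoref{thm:hjb-viscosity} and \autoref{thm:hjb-lipschitz}: the only thing to record is that the data $(\cF(m),\cG(m(T)))$ produced by the couplings always lies in the class $\cD_0(\hd,M)$ from \ref{D:deg}, and that the elements of $\HJcD$ are precisely the (unique) viscosity solutions of \autoref{eq:hjb} with data of this type.

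First I would fix $m\in\CaPX{\frac12}$ and set $(f,\uT)=(\cF(m),\cG(m(T)))$. Since $\CaPX{\frac12}\subset\CPX$, condition \ref{D:deg} gives $(f,\uT)\in\cD\subset\cD_0(\hd,M)$; in particular $(f,\uT)$ is bounded and continuous, so \autoref{thm:hjb-viscosity}\autoref{item:vs-ex} produces a unique viscosity solution $u$ of \autoref{eq:hjb}, and \autoref{thm:hjb-lipschitz}\autoref{item:unique} upgrades it to a bounded classical solution with $\dt u,\fL u\in\Cb$. This establishes \ref{R1}. Conversely, every $u\in\HJcD$ is a bounded classical solution, hence (see \autoref{rem:viscosity}) a viscosity solution, of \autoref{eq:hjb} with data in $\cD_0(\hd,M)$; by the uniqueness in \autoref{thm:hjb-viscosity}\autoref{item:vs-ex} the set $\HJcD$ coincides with the family of viscosity solutions of \autoref{eq:hjb} whose data ranges over $\{(\cF(m),\cG(m(T))):m\in\CaPX{\frac12}\}\subset\cD_0(\hd,M)$. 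Therefore all conclusions of \autoref{thm:hjb-lipschitz} are available for every $u\in\HJcD$.

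Condition \ref{U1} is then immediate from \autoref{thm:hjb-lipschitz}\autoref{item:unique}. For \ref{R3}, I would use the bound in \autoref{thm:hjb-lipschitz}\autoref{item:lipschitz}: for every $u\in\HJcD$ and $t\in\T$ one has $\|\fL u(t)\|_{\hd-2\sigma}\leq 4\big(\tfrac{K}{\hd-2\sigma}+\nu(B_1^c)\big)M(T-t+1)\leq R$, where $R:=4\big(\tfrac{K}{\hd-2\sigma}+\nu(B_1^c)\big)M(T+1)$, so $\fL u$ takes values in the fixed compact interval $[-R,R]$. Since $F'\in\Holder{\gamma}(\R)$ is continuous, $\KHJ:=\sup_{|s|\leq R}|F'(s)|<\infty$ and $\|F'(\fL u)\|_\infty\leq\KHJ$ for all $u\in\HJcD$, which is \ref{R3}. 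Finally, \ref{R2} is exactly \autoref{thm:hjb-lipschitz}\autoref{item:limit}: given $u_n,u\in\HJcD$ with $\|u_n-u\|_\infty\to0$, the $u_n$ are viscosity solutions of \autoref{eq:hjb} with data in $\cD_0(\hd,M)$, hence $\fL u_n(t)\to\fL u(t)$ uniformly on compact sets in $\X$ for every $t\in\T$.

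There is no genuine difficulty here — the corollary is a bookkeeping consequence of the HJB analysis already carried out in \autoref{thm:hjb-viscosity} and \autoref{thm:hjb-lipschitz}. The only points deserving a line of care are the inclusion $\CaPX{\frac12}\subset\CPX$, needed so that \ref{D:deg} applies to the data of every element of $\HJcD$, and the elementary observation that continuity of $F'$ together with the uniform $L^\infty$ bound on $\fL u$ yields the uniform bound on $F'(\fL u)$ required by \ref{R3}.
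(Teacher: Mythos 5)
Your proposal is correct and follows essentially the same route as the paper: the corollary is read off from \autoref{thm:hjb-viscosity}\autoref{item:vs-ex} and \autoref{thm:hjb-lipschitz}, with \ref{R3} obtained from the uniform bound on $\fL u$ in \autoref{thm:hjb-lipschitz}\autoref{item:lipschitz} together with \ref{a:F1}. Your extra details (the inclusion $\CaPX{\frac12}\subset\CPX$ and the compactness argument giving $\KHJ$) merely spell out what the paper leaves implicit.
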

 \begin{proof}
  Conditions \ref{R1} and \ref{U1} are consequences of \autoref{thm:hjb-viscosity}\autoref{item:vs-ex} and
  \autoref{thm:hjb-lipschitz}\autoref{item:unique}, while \ref{R3} follows from
  \autoref{thm:hjb-lipschitz}\autoref{item:lipschitz} and \ref{a:F1}.
  We obtain \ref{R2} from \autoref{thm:hjb-lipschitz}\autoref{item:limit}.
 \end{proof}
 
 \subsection{Uniqueness for the FP equation \texorpdfstring{\eqref{eq:fp}}{(4)}}\label{pf:fp} 
Uniqueness for \autoref{eq:fp} follows from a Holmgren argument using as a test function the solution of the dual equation\vspace{-\baselineskip}
\begin{equation}\label{eq:fp-dual}
 \left\{
 \begin{aligned}
   \dt w &- b\fL w=0\quad&\text{on $\T\times\X$},\\
   w(0) &= \phi\quad&\text{on $\X$}.
 \end{aligned}
 \right.
\end{equation} 
Most of the proof consists of showing existence of solutions of this dual equation.
\begin{theorem}\label{thm:ext-holder-sol-deg}
  Assume \mbox{$2\sigma\in\big(0,\frac{1}{2}\big)$},
  \mbox{$\hb\in\big(\frac{2\sigma}{1-2\sigma},1\big]$},~\ref{L:deg},~\ref{a1'':b}, and
  \mbox{$\phi\in C^1_b(\X)$}.
  Then there exists a viscosity solution $w$ of \autoref{eq:fp-dual} such that 
  $w\in\LC{\hb_0}$ for {$\hb_0 = \hb-\frac{2\sigma}{1-2\sigma}$}.
  If in addition $\nu$ is symmetric at the origin, then 
  we can take \mbox{$\hb\in\big(\frac{2\sigma}{1-\sigma},1\big]$} and {$\hb_0 = \hb-\frac{2\sigma}{1-\sigma}$}.
 \end{theorem}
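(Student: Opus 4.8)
The plan is to construct $w$ by Perron's method and then to obtain the asserted H\"older regularity as an a priori estimate, proved by a bootstrap of non-standard doubling-of-variables arguments. The existence part is routine; the regularity estimate is the core of the proof, and it is there that the restrictions on $2\sigma$ and $\hb$ arise.

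\emph{Existence.} Equation \eqref{eq:fp-dual} has the form $\dt w=\mathcal{F}(t,x,\fL w)$ with $\mathcal{F}(t,x,\ell)=b(t,x)\,\ell$, one of the cases singled out in \autoref{rem:viscosity}; since $b\ge 0$ is bounded and continuous, the comparison principle holds for it (standard viscosity theory for linear integro--PDEs; cf.~\cite{CJK}). As $\phi\in C^1_b(\X)$ is bounded and Lipschitz, hence in $\Hb1$, and $2\sigma<1$, \autoref{prop:lap-lip} gives $\|\fL\phi\|_\infty<\infty$; so, with $C:=B\,\|\fL\phi\|_\infty$, the functions $u^{\pm}(t,x)=\phi(x)\pm Ct$ are, respectively, a super- and a subsolution of \eqref{eq:fp-dual} with $u^{\pm}(0)=\phi$. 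Perron's method then produces a bounded viscosity solution $w$ with $u^-\le w\le u^+$; in particular $w\in C_b(\Tb\times\X)$, $\|w(t)-\phi\|_\infty\le Ct$, and, translating in time and invoking comparison, $w$ is globally Lipschitz in $t$.

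\emph{H\"older estimate: doubling and bootstrap.} It remains to show $\sup_{t\in\T}\|w(t)\|_{\hb_0}<\infty$. For an exponent $\gamma\in(0,\hb_0]$ I would study
\begin{equation*}
 \Theta=\sup_{t\in\Tb,\,x,y\in\X}\Big\{w(t,x)-w(t,y)-\Lambda\,\psi(x-y)-\delta\big(V(x)+V(y)\big)-\eta(T-t)\Big\},
\end{equation*}
where $\psi\in C^1_b(\X)$ coincides with $|z|^{\gamma}$ near $z=0$ and is constant for large $|z|$, $V$ is a Lyapunov function as in \autoref{lemma:levy-lyapunov} (so $\|\fL V\|_\infty<\infty$, controlling the behaviour at spatial infinity), and $\Lambda,\delta,\eta>0$. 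If $\Theta>0$ there is a maximum point $(\bar t,\bar x,\bar y)$ with $\bar t>0$, $r:=|\bar x-\bar y|\in(0,1)$, and $r$ bounded from above. Subtracting the viscosity inequality for $w$ as a subsolution at $\bar x$ and as a supersolution at $\bar y$ --- with the nonlocal operators split at a radius $\rho$, the near part evaluated on $\Lambda\psi$ and the far part on $w$, the latter estimated via the defining maximum inequality of $\Theta$ (which turns increments of $w$ into increments of $\Lambda\psi$ and $\delta V$) --- one reaches a contradiction inequality whose nonlocal contribution splits into a \emph{diffusive} part, weighted by $\tfrac12\big(b(\bar t,\bar x)+b(\bar t,\bar y)\big)\le B$ and controlled through \ref{L:deg} together with the behaviour of $\psi$ near the origin, plus (unless $\nu$ is symmetric at the origin) a first-difference remainder of order $\Lambda\rho^{\gamma-2\sigma}$; and an \emph{oscillation} part, weighted by $b(\bar t,\bar x)-b(\bar t,\bar y)$ and of order $[b]_{\hb}\,r^{\hb}\,\Lambda\,\rho^{\gamma-2\sigma}$ by \ref{a1'':b} and \ref{L:deg}. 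Optimising in $\rho$ and letting $\delta,\eta\to 0$ would give $\sup_t\|w(t)\|_{\gamma}<\infty$ for every $\gamma$ below a first threshold; one then repeats the argument with the far increments of $w$ re-estimated using the H\"older regularity already obtained, each pass raising the admissible exponent. The successive gains form the geometric series $\sum_{k\ge 1}(2\sigma)^k=\tfrac{2\sigma}{1-2\sigma}$, which shrinks to $\tfrac{2\sigma}{1-\sigma}$ when $\nu$ is symmetric at the origin (the extra first-difference remainder then being absent). Because $2\sigma<\tfrac12$ and $\hb>\tfrac{2\sigma}{1-2\sigma}$ (resp. $\hb>\tfrac{2\sigma}{1-\sigma}$), the iteration terminates with $\hb_0=\hb-\tfrac{2\sigma}{1-2\sigma}>0$ (resp. $\hb_0=\hb-\tfrac{2\sigma}{1-\sigma}>0$), giving $w\in\LC{\hb_0}$.

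\emph{Main obstacle.} The hard part is the doubling estimate: $b$ is only $\hb$-H\"older and may vanish, so neither ellipticity nor a Lipschitz bound is at hand, while $\fL$ carries the low positive order $2\sigma$. Keeping the ``bad'' first-difference and oscillation contributions absorbed requires precisely the quantitative input of \ref{L:deg}, and this is what forces the loss of exponent; making the bootstrap converge forces $2\sigma$ to be small, and symmetry of $\nu$ eliminates one bad term and relaxes the threshold. A secondary point: should one prefer classical computations in the doubling step, the a priori estimate can first be established for solutions of regularised (uniformly parabolic, smooth-coefficient) approximations of \eqref{eq:fp-dual} and then transferred to $w$ by stability of viscosity solutions and comparison, the H\"older seminorm being preserved under locally uniform limits.
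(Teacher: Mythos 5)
Your overall strategy (Perron existence plus a doubling-of-variables H\"older estimate with a bootstrap, improved in the symmetric case) is the right one, but two essential points are not actually established, and they are exactly where the paper's work lies. First, you dismiss the comparison principle for \eqref{eq:fp-dual} as ``standard viscosity theory for linear integro--PDEs''. It is not: the coefficient $b$ multiplying $\fL w$ is only $\hb$-H\"older in $x$ and may vanish, and for such $x$-dependence comparison can genuinely fail (for $\fL=c\cdot\nabla$ and non-Lipschitz $b$ uniqueness is false). In the paper, comparison is proved in \autoref{lem:fp-dual-comparison} by the same non-standard doubling argument as the regularity estimate, and it already requires the structural restriction $\hb\geq\frac{2\sigma}{1-2\sigma}$ (resp. $\hb>\frac{2\sigma}{1-\sigma}$ in the symmetric case); so this step cannot be quoted off the shelf.

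Second, and more seriously, the mechanism by which your doubling inequality is supposed to produce a H\"older bound is not specified and, as sketched, does not close. With the penalisation $\Lambda\psi(x-y)+\delta(V(x)+V(y))+\eta(T-t)$ you obtain at an interior maximum an inequality of the form $\eta\leq C\big(B\Lambda\rho^{\gamma-2\sigma}+[b]_\hb\,r^\hb\,\rho^{-2\sigma}+\delta+\dots\big)$: every term on the right is nonnegative (there is no ellipticity and hence no Ishii--Lions-type negative term to absorb into, since $b\geq 0$ may vanish), so no choice of $\rho$ yields a contradiction for small $\eta$; and replacing $\eta$ by a large constant $K$ only gives $w(t,x)-w(t,y)\leq\Lambda|x-y|^{\gamma}+Kt$, which is not a H\"older estimate. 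The paper resolves precisely this obstruction by a different structure: in \eqref{eq:Psi-epsilon} the time-penalty coefficient is chosen self-consistently as $a_{\epsilon,\delta}=M_{\epsilon,\delta}-M^0_{\epsilon,\delta}$ (the gap between the doubled supremum over all times and at $t=0$), the quadratic penalty $|x-y|^2/\epsilon$ is kept, and the resulting quantitative bound \eqref{eq:doubling-est} is balanced by optimising the splitting radius ($r^{2\sigma}=\epsilon^{\hb/2}$, later $r^{\bstr_n}=\epsilon^{\hb+\bstr_n-1}$) and then $\epsilon$ against $|x-y|$; the bootstrap does not ``re-estimate far increments with the H\"older norm already obtained'' but improves the relation $|x_*-y_*|^{\bstr_n}\lesssim\epsilon$ at the maximum point, leading to the recursion $\bstr_{n+1}=\frac{2\bstr_n}{(\hb+\bstr_n)(1-2\sigma)+2\sigma}$ with limit exponent $\hb_0=\hb-\frac{2\sigma}{1-2\sigma}$, and one must also check that the constants $C_n$ stay bounded along the iteration. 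Your ``geometric series of gains $\sum_k(2\sigma)^k$'' matches the total loss numerically but is asserted rather than derived, and the admissible exponent at each pass (and hence the convergence of the iteration and the restriction $2\sigma<\tfrac12$) is never identified. Without these ingredients the proposal does not yet constitute a proof.
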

 The proof is given in \autoref{section:dual}. Under stronger assumptions on $2\sigma$ and $\beta$, this solution is classical.
 \begin{lemma}\label{thm:ext_reg_soln_fp-dual}
  Assume~\ref{L:deg} and  \ref{a1'':b}. If either
  \begin{enumerate}
      \item $2\sigma\in\big(0,\frac{3-\sqrt{5}}{2}\big)$ and $\hb\in \big(2\sigma+\tfrac{2\sigma}{1-2\sigma},1\big]$; or
      \item $\nu$ is symmetric at the origin,
       $2\sigma\in\big(0,\frac{5-\sqrt{17}}{2}\big)$,
       and $\hb\in \big(2\sigma+\tfrac{2\sigma}{1-\sigma},1\big]$;
  \end{enumerate}
  then there exists a bounded classical solution of \autoref{eq:fp-dual}.
 \end{lemma}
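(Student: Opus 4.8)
The plan is to combine the existence result for viscosity solutions of the dual equation~\eqref{eq:fp-dual} from \autoref{thm:ext-holder-sol-deg} with the regularity criterion of \autoref{lemma:classical}, using a bootstrapping step to gain enough H\"older regularity that the viscosity solution becomes classical. First I would observe that the dual equation~\eqref{eq:fp-dual} is of the form~\eqref{eq:viscosity} with $\mathcal{F}(t,x,\ell) = b(t,x)\,\ell$, which is exactly case~\textit{(ii)} in \autoref{rem:viscosity}; since $b\geq0$ by~\ref{a1'':b}, $\mathcal{F}$ is non-decreasing in $\ell$, so the framework of \autoref{section:preliminaries} applies. In particular, the comparison principle holds for~\eqref{eq:fp-dual} (this is part of the general viscosity theory for such linear-in-$\fL$ equations, and should be cited from \cite{CJK} or established analogously to \autoref{thm:hjb-viscosity}).

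The key computation is to check the regularity threshold required by \autoref{lemma:classical}, namely that $w\in\LC{2\sigma+\epsilon}$ for some $\epsilon\in(0,1-2\sigma)$. By \autoref{thm:ext-holder-sol-deg}, in case~\textit{(i)} we obtain a viscosity solution $w\in\LC{\hb_0}$ with $\hb_0 = \hb - \frac{2\sigma}{1-2\sigma}$; the hypothesis $\hb > 2\sigma + \frac{2\sigma}{1-2\sigma}$ gives precisely $\hb_0 > 2\sigma$, and shrinking $\hb$ if necessary we may also arrange $\hb_0 < 1$, hence $\hb_0 - 2\sigma \in (0,1-2\sigma)$ and we may take $\epsilon = \hb_0 - 2\sigma$. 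The constraint $2\sigma < \frac{3-\sqrt5}{2}$ is exactly what makes the interval $\big(2\sigma+\tfrac{2\sigma}{1-2\sigma},1\big]$ nonempty (solving $2\sigma + \frac{2\sigma}{1-2\sigma} < 1$), so the hypotheses are consistent. In the symmetric case~\textit{(ii)}, the same argument applies with $\hb_0 = \hb - \frac{2\sigma}{1-\sigma}$ and the threshold $2\sigma < \frac{5-\sqrt{17}}{2}$ making $\big(2\sigma+\tfrac{2\sigma}{1-\sigma},1\big]$ nonempty. Then \autoref{lemma:classical} directly yields $\dt w\in\Cb$ and that $w$ is a bounded classical solution.

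I would present this as two short steps: first invoke \autoref{thm:ext-holder-sol-deg} to produce $w$ together with its H\"older exponent, doing the elementary arithmetic to verify $\hb_0 - 2\sigma$ lies in the admissible range $(0,1-2\sigma)$ under each of the two hypotheses; then invoke \autoref{lemma:classical} with $\mathcal{F}(t,x,\ell)=b\ell$ and the comparison principle for~\eqref{eq:fp-dual}. The only mild subtlety — and the part I expect to need the most care — is confirming that the comparison principle genuinely holds for~\eqref{eq:fp-dual} with a merely continuous, possibly vanishing coefficient $b$; this is where one must point to the appropriate result in \cite{CJK} (or note that the standard doubling-of-variables proof for bounded, nonnegative, continuous $b$ goes through because $\mathcal{F}(t,x,\ell)=b(t,x)\ell$ is Lipschitz in $\ell$ and the usual structure conditions are satisfied). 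Everything else is bookkeeping: the real work — constructing the H\"older-continuous viscosity solution in the first place — has been deferred to \autoref{thm:ext-holder-sol-deg}, whose proof occupies \autoref{section:dual}.
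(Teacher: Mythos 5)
Your proposal is correct and follows essentially the same route as the paper: invoke \autoref{thm:ext-holder-sol-deg} to obtain a viscosity solution $w\in\LC{\hb_0}$ with $\hb_0=\hb-\frac{2\sigma}{1-2\sigma}>2\sigma$ (resp.\ $\hb_0=\hb-\frac{2\sigma}{1-\sigma}$ in the symmetric case), then upgrade it to a bounded classical solution via \autoref{prop:lap-lip} and \autoref{lemma:classical}. One correction on the point you yourself flag as the subtlety: the comparison principle for \autoref{eq:fp-dual} is not to be found in \cite{CJK}, and your fallback claim that the ``standard'' doubling-of-variables proof goes through for merely continuous, possibly vanishing $b$ because $\mathcal{F}(t,x,\ell)=b(t,x)\ell$ is Lipschitz in $\ell$ is not right: in the doubling argument the cross term $\big(b(t_*,x_*)-b(s_*,y_*)\big)\fLhigh v(s_*,y_*)$ carries a factor of order $r^{-2\sigma}$, and balancing it against the modulus of continuity of $b$ is precisely why the H\"older condition $\hb\geq\frac{2\sigma}{1-2\sigma}$ (with $2\sigma<\tfrac12$), or its symmetric variant, is imposed; for merely continuous $b$ uniqueness can genuinely fail (cf.\ the first-order counterexamples mentioned in the paper). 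The correct reference is \autoref{lem:fp-dual-comparison}, proved in \autoref{section:dual} of this paper, whose hypotheses are implied by those of the present lemma, so your argument closes once you point there. A minor simplification: the ``shrinking $\hb$'' step is unnecessary, since $\hb_0\leq 1-\frac{2\sigma}{1-2\sigma}<1$ automatically, so $\hb_0-2\sigma\in(0,1-2\sigma)$ without further adjustment.
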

 \begin{proof}
 \begin{part*}\label{part:deg}
  By \autoref{thm:ext-holder-sol-deg}, there is a viscosity solution $w$ of \autoref{eq:fp-dual} such that \mbox{$w\in\LC{\hb_0}$},
  where $\hb_0=\hb-\frac{2\sigma}{1-2\sigma}>2\sigma$.
  By \autoref{prop:lap-lip} and \autoref{lemma:classical} it follows that $\fL w\in\LC{\hb_0-2\sigma}$ and 
  $w$ is a bounded classical solution of \autoref{eq:fp-dual}.
 \end{part*}
 \begin{part*}
 It follows by a similar argument as in \autoref{part:deg}, using the symmetric case of \autoref{thm:ext-holder-sol-deg}.\qedhere
 \end{part*}
 \end{proof}

In view of the critical result of \autoref{thm:ext_reg_soln_fp-dual}, the rest of the uniqueness proof is the short Holmgren argument given in the proof of \cite[Theorem 6.7]{CJK}. We reproduce it here for the sake of readability and completeness.
 
 \begin{proof}[Proof of \autoref{thm:fp-uniqueness}]
  Let $\varphi \in C_c^\infty(\X)$ and $t_0\in(0,T]$, and
  take $\widetilde{b}(t) = b(t_0-t)$ for every $t\in[0,t_0]$.
  Replace $b$ by $\widetilde{b}$ in \autoref{eq:fp-dual}.
  Then by \autoref{thm:ext_reg_soln_fp-dual} there exists a bounded classical solution $\widetilde{w}$ of \autoref{eq:fp-dual}.
Let $w(t)= \widetilde{w}(t_0-t)$ for $t\in [0,t_0]$.
  Then $\dt w, \fL w\in C\big((0,t_0)\times\X\big)$, and $w$ is a bounded classical solution of 
  \begin{align}\label{eq:fp-dual-back-nondeg}
   \left\{
   \begin{aligned}
    \dt w(t) &+ b(t)\fL w(t)=0\quad\text{in $(0,t_0)\times \X$},\\
    w(t_0) &= \varphi.
   \end{aligned}
   \right.
  \end{align}

  Suppose $m$ and $\widehat{m}$ are two very weak solutions of \autoref{eq:fp}
  with the same initial condition $m_0$ and coefficient $b$.
  Since $m$ is a distributional solution of \autoref{eq:fp}
  and $w$ satisfies~\eqref{eq:fp-dual-back-nondeg},
  \begin{align*}
   \big(m(t_0)-\widehat{m}(t_0)\big)[\varphi]-0
   =\int_0^{t_0} \big(m(\tau)-\widehat{m}(\tau)\big)\big[\dt w + b \fL(w)\big]\,d\tau=0.
  \end{align*}
  Since $t_0$ and $\varphi$ were arbitrary, this means $m(t)=\widehat m(t)$ in $\PX$ for $t\in(0,T)$.
 \end{proof}

 \begin{corollary}\label{cor:b-uniq}
  Assume~\ref{a:F1}, \ref{a:m}, \ref{D:deg},~\ref{L:deg}.
  Then condition \ref{U2} is satisfied if either
  \begin{enumerate}
      \item
      $\frac{2\sigma}{\hd-2\sigma}\big(1+\frac1{1-2\sigma}\big)<\gamma$; or
      \item
      $\nu$ is symmetric at the origin\footnotemark[\value{footnote}], and
      $\frac{2\sigma}{\hd-2\sigma}\big(1+\frac1{1-\sigma}\big)<\gamma$;
  \end{enumerate}
 \end{corollary}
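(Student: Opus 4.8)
The plan is to deduce \ref{U2} directly from \autoref{thm:fp-uniqueness}: I will show that every coefficient $b=F'(\fL u)$ with $u\in\HJcD$, paired with an arbitrary $m_0\in\PX$, satisfies the hypotheses of that theorem, so that \autoref{eq:fp} has at most one very weak solution. The only real work is to propagate the regularity of $\fL u$ through $F'$ into assumption \ref{a1'':b}, and then to observe that the threshold imposed on $\gamma$ in the statement is precisely what is needed to place the H\"older exponent of $b$ inside the admissible window of \autoref{thm:fp-uniqueness}.

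First I fix $u\in\HJcD$; it is the unique viscosity solution of \autoref{eq:hjb} with data $(f,\uT)=(\cF(m),\cG(m(T)))$ for some $m\in\CaPX{\frac12}\subset\CPX$, so by \ref{D:deg} its data lies in $\cD_0(\hd,M)$ and \autoref{thm:hjb-lipschitz} applies. This yields $\fL u\in\Cb\cap\LC{\hd-2\sigma}$, with $\Lambda:=\sup_{t\in\T}\|\fL u(t)\|_{\hd-2\sigma}<\infty$. I then verify \ref{a1'':b} for $b:=F'(\fL u)$. By \ref{a:F1}, $F\in C^1(\R)$ and $F'\geq0$, hence $b$ is continuous on $\T\times\X$ (so $b\in\Cu$) and $0\le b(t,x)\le B:=\max_{|s|\le\|\fL u\|_\infty}F'(s)<\infty$. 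Moreover $F'$ is $\gamma$-H\"older on the bounded range of $\fL u$ with some constant $C_{F'}$, so for $|x-y|\le1$,
\[
|b(t,x)-b(t,y)|\le C_{F'}\,|\fL u(t,x)-\fL u(t,y)|^{\gamma}\le C_{F'}\,[\fL u(t)]_{\hd-2\sigma}^{\gamma}\,|x-y|^{\gamma(\hd-2\sigma)} ,
\]
whence $b\in\LC{\hb}$ with $\hb:=\gamma(\hd-2\sigma)$ and $\sup_{t}[b(t)]_{\hb}\le C_{F'}\Lambda^{\gamma}$. Here $\hb>0$ because $\hd>2\sigma$ (by \ref{D:deg}) and $\gamma>0$, while $\hb=\gamma(\hd-2\sigma)\le1-2\sigma<1$ because $\gamma\le1$ and $\hd\le1$. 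Thus \ref{a1'':b} holds with this $\hb$ and $B$.

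It remains to locate $\hb$ in the window required by \autoref{thm:fp-uniqueness}. In the first case, the hypothesis $\frac{2\sigma}{\hd-2\sigma}\big(1+\frac1{1-2\sigma}\big)<\gamma$ is equivalent to $2\sigma+\frac{2\sigma}{1-2\sigma}<\gamma(\hd-2\sigma)=\hb$; combined with $\hb<1$ this both places $\hb$ in $\big(2\sigma+\frac{2\sigma}{1-2\sigma},1\big]$ and forces $2\sigma+\frac{2\sigma}{1-2\sigma}<1$, i.e.~$2\sigma\in\big(0,\frac{3-\sqrt5}{2}\big)$. So \autoref{thm:fp-uniqueness}\autoref{item:fp-uniq-deg} applies and yields \ref{U2}. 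The second case is identical after replacing $\frac{2\sigma}{1-2\sigma}$ by $\frac{2\sigma}{1-\sigma}$ and carrying the symmetry-at-the-origin hypothesis on $\nu$ along, now using \autoref{thm:fp-uniqueness}\autoref{item:fp-uniq-deg1}, since $2\sigma+\frac{2\sigma}{1-\sigma}<1$ is equivalent to $2\sigma\in\big(0,\frac{5-\sqrt{17}}{2}\big)$.

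I expect no genuine obstacle: \autoref{cor:b-uniq} is essentially the bookkeeping step that feeds the hard analytic result \autoref{thm:fp-uniqueness} (resting in turn on \autoref{thm:ext-holder-sol-deg} and \autoref{thm:ext_reg_soln_fp-dual}) into the abstract scheme of \cite{CJK}. The one point to handle with care is the algebra linking the $\gamma$-threshold to the $\hb$-window and to the admissible interval for $2\sigma$, which is exactly what the two displayed equivalences above record.
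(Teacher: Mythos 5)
Your proposal is correct and follows essentially the same route as the paper: deduce $b\in\LC{\gamma(\hd-2\sigma)}$ from \autoref{thm:hjb-lipschitz}\autoref{item:lipschitz} together with the H\"older continuity of $F'$, verify \ref{a1'':b} with $\hb=\gamma(\hd-2\sigma)$, and conclude \ref{U2} from \autoref{thm:fp-uniqueness} via the same algebraic equivalence between the $\gamma$-threshold, the $\hb$-window, and the admissible range of $2\sigma$. The only (harmless) difference is that the paper carries out this verification for the averaged coefficient $b=\int_0^1 F'\big(s\,\fL u_1+(1-s)\,\fL u_2\big)\,ds$ built from two HJB solutions---the coefficient arising in the MFG uniqueness argument of \cite{CJK}---whereas you treat $F'(\fL u)$ for a single $u\in\HJcD$, exactly as in the literal statement of \ref{U2}; your estimate transfers verbatim to the averaged coefficient, since the H\"older bound passes through the convex combination and the $s$-integral.
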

 \begin{proof*}
  Let $u_1,\,u_2$ be bounded classical solutions of \autoref{eq:hjb} and $v_1=\fL u_1$, $v_2=\fL u_2$.
  Since $F'\in \Holder{\gamma}(\R)$ by \ref{a:F1}, we may consider 
  \begin{align*}
   b(t,x) = \int_0^1 F'\big(sv_1(t,x)+(1-s)v_2(t,x)\big)\,ds.
  \end{align*}
  Because $u_1,\,u_2$ are bounded classical solutions and $F'\geq0$, we have $b\in\Cu$ and
  $b\geq0$.
  \smallskip\par
  \begin{part*}\label{part:b-uniq1}
  By \autoref{thm:hjb-lipschitz}\autoref{item:lipschitz},  $v_1,\,v_2 \in\LC{\hd-2\sigma}$.
  Thus $b$ satisfies~\ref{a1'':b} with \mbox{$\hb = \gamma(\hd-2\sigma)$}.
  Then, since $\frac{2\sigma}{(\hd-2\sigma)}\big(1+\frac1{1-2\sigma}\big)<\gamma$,
  we have $\hb >2\sigma+ \frac{2\sigma}{1-2\sigma}$
  and \ref{U2} follows from \autoref{thm:fp-uniqueness}\autoref{item:fp-uniq-deg}.
  \end{part*}
  \begin{part*}
  We proceed as in \autoref{part:b-uniq1} and use \autoref{thm:fp-uniqueness}\autoref{item:fp-uniq-deg1}.\qedhere
  \end{part*}
  \end{proof*}

\section{Proof of \autoref{thm:ext-holder-sol-deg}---Dual of degenerate FP equation}\label{section:dual}
We employ viscosity solutions theory and a non-standard doubling of variables argument to prove existence
 of H\"older-continuous solutions of \autoref{eq:fp-dual}. 
 
 \begin{lemma} \label{lem:fp-dual-comparison}
  Assume \mbox{$2\sigma\in\big(0,\frac{1}{2}\big)$},
  \mbox{$\hb\in\big[\frac{2\sigma}{1-2\sigma},1\big]$},~\ref{L:deg},~\ref{a1'':b}, and \mbox{$\phi\in C_b(\X)$}.
  \begin{enumerate}
\item \label{thm:comparison_principle} The comparison principle holds for \autoref{eq:fp-dual} (see \autoref{def:comparison}).
      \item \label{cor:fp-dual-existence}
  There exists a viscosity solution $w$ of \autoref{eq:fp-dual}.  \end{enumerate}
  \end{lemma}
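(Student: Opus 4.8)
The plan is to recognise \eqref{eq:fp-dual} as the instance of \eqref{eq:viscosity} with $\mathcal F(t,x,\ell)=b(t,x)\,\ell$ (cf.~\autoref{rem:viscosity}), which by \ref{a1'':b} is continuous and, since $b\ge0$, nondecreasing in $\ell$; thus \autoref{def:viscosity} applies, and once the comparison principle \autoref{thm:comparison_principle} is proved, existence \autoref{cor:fp-dual-existence} follows by Perron's method. To prove \autoref{thm:comparison_principle} it is enough to show $u^--\eta t\le u^+$ for every $\eta>0$ and let $\eta\downarrow0$; since $\fL$ is linear, $v:=u^--\eta t$ is a viscosity subsolution of $\dt v-b\fL v=-\eta$. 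Suppose, toward a contradiction, that $M:=\sup_{\Tb\times\X}(v-u^+)>0$. Fix a Lyapunov function $V$ with $\|\fL V\|_\infty<\infty$ (\autoref{lemma:levy-lyapunov}) and, for small $\epsilon,\delta,\lambda>0$, maximise over $\Tb\times\Tb\times\X\times\X$
\[
\Phi(t,s,x,y)=v(t,x)-u^+(s,y)-\tfrac{(t-s)^2}{2\delta}-\tfrac1\epsilon\,\psi(x-y)-\lambda\big(V(x)+V(y)\big),
\]
where $\psi\ge0$ is a first-order penalisation ($\psi(w)\sim|w|$ near the origin), suitably regularised so the test functions below are admissible in \autoref{def:viscosity}. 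Doubling in time makes the temporal test functions smooth, while $\lambda(V(x)+V(y))$ confines the maximiser $(\bar t,\bar s,\bar x,\bar y)$ to a fixed compact set---where $b$ is uniformly continuous---and tames the large-jump part of $\fL$. Standard estimates give $\tfrac{(\bar t-\bar s)^2}{\delta}\to0$, $\tfrac1\epsilon\psi(\bar x-\bar y)\to0$, $\rho:=|\bar x-\bar y|\to0$, and---using $v(0,\cdot)\le\phi\le u^+(0,\cdot)$, continuity of $\phi$ on compacts, and the usual modification near the temporal endpoints---that $\bar t,\bar s\in(0,T)$ and $\Phi(\bar t,\bar s,\bar x,\bar y)\to M$.

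Next, apply the sub-/supersolution inequalities at $(\bar t,\bar x)$ and $(\bar s,\bar y)$, splitting $\fL=\fLlow+\fLhigh$ at a radius $r\in(0,1)$ so that $\fLhigh$ acts on $u^\pm$ and $\fLlow$ on the smooth test functions $\varphi_1=\tfrac1\epsilon\psi(\cdot-\bar y)+\lambda V$, $\varphi_2=-\tfrac1\epsilon\psi(\bar x-\cdot)+\lambda V$; subtracting---the two $\dt$-terms both equal $\tfrac{\bar t-\bar s}{\delta}$ and cancel, which is the point of doubling in time---one obtains
\[
\eta\ \le\ b(\bar t,\bar x)\big[\fLhigh u^-(\bar t,\bar x)+\fLlow\varphi_1(\bar x)\big]-b(\bar s,\bar y)\big[\fLhigh u^+(\bar s,\bar y)+\fLlow\varphi_2(\bar y)\big].
\]
The large-jump part splits as $b(\bar t,\bar x)\big[\fLhigh u^-(\bar t,\bar x)-\fLhigh u^+(\bar s,\bar y)\big]+\big[b(\bar t,\bar x)-b(\bar s,\bar y)\big]\fLhigh u^+(\bar s,\bar y)$; since $\psi(x-y)$ is invariant under $(x,y)\mapsto(x+z,y+z)$, the maximum property of $\Phi$ together with $b\ge0$ and $\|\fL V\|_\infty<\infty$ bounds the first term by $C\lambda$, while the second is $\le|b(\bar t,\bar x)-b(\bar s,\bar y)|\,2\|u^+\|_\infty\,\nu(B_r^c)\lesssim\big(\rho^{\hb}+\omega(|\bar t-\bar s|)\big)r^{-2\sigma}$ by \ref{a1'':b} and \ref{L:deg} (which gives $\nu(B_r^c)\lesssim r^{-2\sigma}$), with $\omega$ a modulus of $t$-continuity of $b$ on the compact set. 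For the small-jump part, $\fLlow\varphi_1(\bar x)=\tfrac1\epsilon\int_{B_r}\big(\psi(\xi+z)-\psi(\xi)\big)\nu(dz)+\lambda\fLlow V(\bar x)$ with $\xi=\bar x-\bar y$ (and symmetrically for $\varphi_2$); using \ref{L:deg} in the form $\int_{B_r}|z|^j\,\nu(dz)\lesssim r^{j-2\sigma}$ ($j\ge1$), the $\lambda V$-pieces are $\lesssim\lambda r^{1-2\sigma}$ and the $\psi$-pieces reduce to a second-difference piece weighted by $b(\bar t,\bar x)+b(\bar s,\bar y)\le2B$, together with---only when $\nu$ is not symmetric at the origin---a first-difference piece weighted by $b(\bar t,\bar x)-b(\bar s,\bar y)$, hence carrying an extra factor $\lesssim\rho^{\hb}$.

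It remains to balance the errors. Sending $\delta\downarrow0$ discards $\omega(|\bar t-\bar s|)$; then, taking $r=\epsilon^\theta$ with $\theta$ suitably chosen and letting $\epsilon\downarrow0$, one checks (using $\rho\lesssim\epsilon$, which the choice of $\psi$ guarantees) that the large-jump remainder $\rho^{\hb}r^{-2\sigma}$ and the $\epsilon$-singular small-jump pieces (involving $r^{2-2\sigma}$, resp.~$r^{1-2\sigma}$ with the extra $\rho^{\hb}$ in the non-symmetric case) all vanish for $\hb$ in the assumed range---$\hb>\tfrac{2\sigma}{1-2\sigma}$ in general, and the milder $\hb>\tfrac{2\sigma}{1-\sigma}$ when $\nu$ is symmetric at the origin, the symmetry removing the first-difference term that otherwise forces the larger threshold. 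Letting finally $\lambda\downarrow0$ gives $\eta\le0$, a contradiction; hence $M\le0$, i.e.~$u^--\eta t\le u^+$, and $\eta\downarrow0$ proves the comparison principle. This balancing is the ``non-standard'' part of the doubling and is the main obstacle: with no coercivity available (since $b$ may vanish), the loss $r^{-2\sigma}$ incurred by evaluating the nonlocal operator on the merely $\hb$-Hölder coefficient $b$ must be absorbed into the cost of the penalisation, which is precisely what couples the admissible order $2\sigma$ to $\hb$.

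Finally, for \autoref{cor:fp-dual-existence}, Perron's method now applies: the supremum $w$ of all viscosity subsolutions of \eqref{eq:fp-dual} that lie below the constant supersolution $\|\phi\|_\infty$ and satisfy $u^-(0,\cdot)\le\phi$ is a viscosity solution of the equation on $\T\times\X$ (the bump construction being routine, also for nonlocal equations), and the barriers $\phi(x_0)\pm\epsilon\pm2\|\phi\|_\infty\,\chi(x)\pm C_\epsilon t$---with $\chi\in C^\infty_b(\X)$ a cutoff that vanishes near $x_0$ and equals $1$ off a small ball (so $\|\fL\chi\|_\infty<\infty$) and $C_\epsilon$ large---are, via the comparison principle, sub-/superenvelopes of $w$ forcing $\lim_{(t,x)\to(0,x_0)}w(t,x)=\phi(x_0)$ for every $x_0\in\X$. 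Thus $w$ is the required viscosity solution of \eqref{eq:fp-dual}.
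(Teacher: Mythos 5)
Your overall architecture is the same as the paper's: doubling of variables with a Lyapunov-function localization from \autoref{lemma:levy-lyapunov}, splitting $\fL$ at a radius $r$ so that $\fLhigh$ acts on the semicontinuous functions and $\fLlow$ on the penalization, using the $\hb$-H\"older continuity of $b$ only on the large-jump difference term, balancing $r$ against the penalization parameter, and then Perron's method for existence. The genuine difference is the spatial penalization: you take $\tfrac1\epsilon\psi(x-y)$ with $\psi\sim|x-y|$ near the origin, whereas the paper uses the quadratic $\psi_{\epsilon,\delta}$ of \eqref{eq:psi-epsilon}, for which the small-jump contribution is computed exactly in \eqref{eq:L_r} and the explicit scaling $r^{2\sigma}=\epsilon^{\hb/2}$ closes the argument precisely because $\hb\ge\frac{2\sigma}{1-2\sigma}$ makes the exponent $\frac{\hb(1-2\sigma)}{4\sigma}-\frac12$ nonnegative. (Replacing the paper's $\frac{a_{\epsilon,\delta}}{4T}(t+s)$ term by the $-\eta t$ perturbation of the subsolution is immaterial.)

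The gap is in your error budget for the first-order penalization, which is exactly the ``non-standard balancing'' on which the whole proof hinges. You claim simultaneously (i) $\rho:=|\bar x-\bar y|\lesssim\epsilon$, ``which the choice of $\psi$ guarantees'', and (ii) a second-difference small-jump piece of size $r^{2-2\sigma}$ (i.e.\ $\lesssim r^{2-2\sigma}/\epsilon$ after the $1/\epsilon$ scaling) with a constant independent of the regularization of $\psi$. These are incompatible. If $\psi$ is regularized with a bounded Hessian (e.g.\ $\sqrt{1+|w|^2}-1$), then $\psi(w)\sim|w|^2$ near $0$, and since $\rho\to0$ the maximum-point inequality only yields $\rho\lesssim\sqrt{\epsilon}$, not $\rho\lesssim\epsilon$; if instead $\psi$ is genuinely cone-like at scale $\rho$ (regularization scale $\lesssim\epsilon$), then $\|D^2\psi\|\sim\epsilon^{-1}$ near the vertex and the second-difference integral is only $\lesssim\min\{r^{2-2\sigma}/\epsilon,\ r^{1-2\sigma}\}$, so your listed piece acquires an extra factor of order $1/\epsilon$ (or $1/\rho$, and $\rho$ may be much smaller than $\epsilon$). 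You never specify the regularization, never exhibit a choice $r=\epsilon^\theta$, and the concluding ``one checks \dots all vanish for $\hb$ in the assumed range'' is not carried out; with the budget exactly as listed, the argument would give comparison for every $\hb>2\sigma$, which should itself have been a warning sign. The paper's quadratic penalization avoids the issue: the doubling estimate \eqref{eq:square-estimate} gives $\rho^2/\epsilon\to0$, which is exactly what is fed into \eqref{eq:doubling-est}. On the positive side, your regrouping of the small-jump terms---even (second-difference) part weighted by $b$, odd part weighted by $b(\bar t,\bar x)-b(\bar s,\bar y)\lesssim\rho^{\hb}$---is a genuinely good observation that the paper does not exploit (it bounds both $\fLlow$-terms by $\|b\|_\infty$, which is what produces the threshold $\frac{2\sigma}{1-2\sigma}$ and the separate symmetric improvement). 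If you fix the penalization (track the regularization scale, or simply revert to the quadratic one) and actually perform the balancing, a correct proof along your lines is plausible; as written, the decisive step is not established.
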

 \noindent \textit{Proof of \hyperref[thm:comparison_principle]{Part {\rm(}i{\rm)}}.}
  \begin{step}\label{step:max}
For every $\epsilon,\delta>0$ let
 \begin{align}\label{eq:psi-epsilon}
  \psi_{\epsilon,\delta}(x,y) = \frac{|x-y|^2}{\epsilon}+\delta\big(V(x)+V(y)\big),
 \end{align}
 where $V$ is a Lyapunov function such that $\|\fL V\|_\infty<\infty$  (see \autoref{lemma:levy-lyapunov}). Let  $\{a_{\epsilon,\delta}\}_{\epsilon,\delta}$ be a bounded set\footnote{In \autoref{step:contra} we will simply take  $a_{\epsilon,\delta}=M$; we write a slightly more general version than needed here to reuse some of the results in the proof of \autoref{thm:ext-holder-sol-deg} below.} and $u$, $v$ be a viscosity sub- and super-solution, respectively. Define
 \begin{align}\label{eq:Psi-epsilon}
  \Psi_{\eta,\epsilon,\delta}(t,x,s,y)
  =  u(t,x) -v(s,y)-\psi_{\epsilon,\delta}(x,y) - \frac{|t-s|^2}{\eta}
  - \frac{a_{\epsilon,\delta}}{4T}(t+s).
 \end{align}
 Following the usual argument in the doubling of variables technique (cf.~\cite[Theorem 4.1]{MR2129093}) we can show that  for every $\eta,\epsilon,\delta>0$ the function $\Psi_{\eta,\epsilon,\delta}$ has 
  a maximum point $(t_*,x_*,s_*,y_*)\in\big(\Tb\times\X\big)^2$ such that
  \begin{align}\label{eq:square-estimate}
    \frac{|x_*-y_*|^2}{\epsilon} + \frac{|t_*-s_*|^2}{\eta} \leq  \frac{u(t_*,x_*) -v(s_*,y_*) -u(s_*,y_*) + v(t_*,x_*)}{2},
  \end{align}
  and for every $\delta>0$ there exist subsequence $\eta_k$ such that for every $\epsilon>0$
  \begin{align}\label{eq:t-limit}
   \lim_{\eta_k\to0} \frac{|t_* - s_*|^2}{\eta_k}
   = 0\quad\text{and}\quad \lim_{\eta_k\to0}(t_*,x_*,s_*,y_*) 
   = (t_{\epsilon,\delta},x_{\epsilon,\delta},t_{\epsilon,\delta},y_{\epsilon,\delta}),
  \end{align}
  and a subsequence $\epsilon_n$ such that
  \begin{align}\label{eq:x-limit}
   \lim_{\epsilon_n\to0}\lim_{\eta_k\to0} \frac{|x_* - y_*|^2}{\epsilon_n}= 0
   \quad\text{and}\quad
   \lim_{\epsilon_n\to0}\lim_{\eta_k\to0}(t_*,x_*,s_*,y_*)
   = (t_{\delta},x_{\delta},t_{\delta},x_{\delta}),
  \end{align}
  where $(t_{\epsilon,\delta},x_{\epsilon,\delta},t_{\epsilon,\delta},y_{\epsilon,\delta})$
  and $(t_{\delta},x_{\delta},t_{\delta},x_{\delta})$
  denote the respective limit points.
\end{step} 
\begin{step}\label{step:holder}
  Let $t_*,s_*>0$. The maximum point of $\Psi_{\eta,\epsilon,\delta}$ is  $(t_*,x_*,s_*,y_*)$. By taking
  \begin{align*}
      v(s_*,y_*) + \psi_{\epsilon,\delta}(x,y_*)+\frac{|t-s_*|^2}{\eta} +\frac{a_{\epsilon,\delta}(t+s_*)}{4T},
      \end{align*} and
      \begin{align*}
         u(t_*,x_*) - \psi_{\epsilon,\delta}(x_*,y) -\frac{|t_*-s|^2}{\eta} -\frac{a_{\epsilon,\delta}(t_*+s)}{4T},
      \end{align*}
 as test functions in \autoref{def:viscosity} of viscosity sub- and supersolutions, we have
  \begin{align*}
  \begin{split}
   \frac{a_{\epsilon,\delta}}{4T} + \frac{2(t_*-s_*)}{\eta} 
    - b(t_*,x_*)\Big(\fLhigh u(t_*,x_*)+\fLlow\psi_{\epsilon,\delta}(\,\cdot\,,y_*)(x_*)\Big)\leq 0,\\
   -\frac{a_{\epsilon,\delta}}{4T} + \frac{2(t_*-s_*)}{\eta} 
    - b(s_*,y_*)\Big(\fLhigh v(s_*,y_*)-\fLlow\psi_{\epsilon,\delta}(x_*,\,\cdot\,)(y_*)\Big)\geq 0.
  \end{split}
  \end{align*}
   We subtract these two inequalities and obtain 
  \begin{align}\label{eq:a-estimate}
  \begin{split}
   \frac{a_{\epsilon,\delta}}{2T}
   &\leq b(s_*,y_*)\fLlow\psi_{\epsilon,\delta}(x_*,\,\cdot\,)(y_*)
     +b(t_*,x_*)\fLlow\psi_{\epsilon,\delta}(\,\cdot\,,y_*)(x_*) \\
   &\qquad+ b(t_*,x_*)\Big(\big(\fLhigh u\big)(t_*,x_*)-\big( \fLhigh v\big)(s_*,y_*)\Big)\\
   &\qquad+ \big(b(t_*,x_*)-b(s_*,y_*)\big)\big(\fLhigh v\big)(s_*,y_*).
   \end{split}
  \end{align}
   Observe that since
  $\Psi_{\eta,\epsilon,\delta}(t_*,x_*+z,s_*,y_*+z)\leq \Psi_{\eta,\epsilon,\delta}(t_*,x_*,s_*,y_*)$, 
  we have
  \begin{multline*}
   u(t_*,x_*+z) -v(s_*,y_*+z) - u(t_*,x_*) + v(s_*,y_*)
   \\ \leq {\delta}\, \big(V(x_* + z) + V(y_* + z)-V(x_*)-V(y_*)\big).
  \end{multline*}
  Therefore, because of~\ref{L:deg}, for every $r\in(0,1)$
  \begin{align}\label{eq:estimate-high1}
    \big(\fLhigh u\big)(t_*,x_*)-\big(\fLhigh v\big)(s_*,y_*) 
    \leq 2\delta \bigg(\|\fL V\|_\infty+\|\nabla V\|_\infty\int_{B_r}|z|\,\nu(dz)\bigg).
  \end{align}
  We also find that
  \begin{align}\label{eq:estimate-high2}
    \big(\fLhigh v\big)(t_*,y_*)
    \leq 2\|v\|_\infty \bigg(\nu\big(B_1^c\big)
    +\int_{B_1\setminus B_r}\,\nu(dz)\bigg).
  \end{align}
  Again by~\ref{L:deg} and the Cauchy--Schwarz inequality,
  \begin{align}\label{eq:L_r}
  \begin{split}
  \big(\fLlow \psi_{\epsilon,\delta}(x_*,\cdot\,)\big)(y_*)
  &= \int _{|z|\leq r} \frac{|x_*-y_*+z|^2-|x_*-y_*|^2}{\epsilon}\,\nu(dz) + {\delta} \fLlow V(y_*)  \\
&\leq  \int _{|z|\leq r}\bigg(\frac{\big||z|^2+2(x_*-y_*)\cdot z\big|}{\epsilon}
        +{\delta\,\|\nabla V\|_\infty|z|}\bigg)\,\nu(dz)\\
  &\leq\frac{K}{1-2\sigma}\Big(\delta+\frac{2|x_*-y_*|+r}{\epsilon}\Big)r^{1-2\sigma}.
  \end{split}
  \end{align}
 
  By using~\eqref{eq:estimate-high1},~\eqref{eq:estimate-high2}, \eqref{eq:L_r}, and \ref{L:deg}
  in inequality~\eqref{eq:a-estimate}, we obtain
  \begin{align*}
   \frac{a_{\epsilon,\delta}}{2T}&\leq
   \frac{2 K\|b\|_\infty}{1-2\sigma}\bigg(2 \delta+\frac{2|x_*-y_*|+r}{\epsilon}\bigg)r^{1-2\sigma}
   +2\delta\|b\|_\infty\|\fL V\|_\infty\\
   &\qquad+ 2\|v\|_\infty \big|b(t_*,x_*)-b(s_*,y_*)\big|\bigg(\nu\big(B_1^c\big)
   +\frac{K}{1-2\sigma}r^{-2\sigma}\bigg).
 \end{align*}
 Taking the limit as $\eta_k \to 0$, using the second part of \eqref{eq:t-limit}, and then~\ref{a1'':b}, we get
 \begin{align} \label{eq:doubling-est}
   \frac{a_{\epsilon,\delta}}{2T}
   \leq C\bigg(\frac{|x_{\epsilon,\delta}-y_{\epsilon,\delta}|+r}{\epsilon}r^{1-2\sigma}
   + |x_{\epsilon,\delta}-y_{\epsilon,\delta}|^{\hb} \big(1 + r^{-2\sigma}\big)+  \delta\bigg),
  \end{align}
 where constant $C>0$ is independent of $r$.
\end{step}
\begin{step}\label{step:contra}
   Denote
  \begin{align*}
   M_0 = \sup_{x\in\X}\Big(u(0,x)-v(0,x)\Big)\quad\text{and}\quad
   M= \sup_{t\in\Tb\vphantom{x\in\X}}\sup_{x\in\X\vphantom{t\in\Tb}} \Big(u(t,x)-v(t,x)\Big),
  \end{align*}
  and assume  by contradiction that $M_0\leq 0$ and  $M>0$.
  Because the functions $u$ and $v$ are bounded,
  we also have $M<\infty$.

  Take $a_{\epsilon,\delta} =M$. Suppose $\lim\limits_{\epsilon_n,\eta_k\to0}t_* = t_\delta = 0$.
  Then for every $\delta>0$ we have
  \begin{align}\label{eq:m-contradiction}
   \limsup_{\epsilon_n\to0}\limsup_{\eta_k\to 0}\Psi_{\eta_k,\epsilon_n,\delta}(t_*,x_*,s_*,y_*)
   \leq  \Phi(0,x_{\delta},0,x_{\delta}) \leq M_0 \leq 0.
  \end{align} 
  On the other hand, by the definition of $M$, there exists a point $(t_M,x_M)$ such that
  $\Phi(t_M,x_M,t_M,x_M)\geq \frac{3}{4} M$.
  Take $\delta>0$ such that $\delta V(x_M) \leq\tfrac{1}{16}M$.
  Then we get
  \begin{multline*}
   \Psi_{\eta_k,\epsilon_n,\delta}(t_*,x_*,s_*,y_*)
   \geq \Phi(t_M,x_M,t_M,x_M)-2\delta V(x_M) - \tfrac{1}{2}M\\
   \geq M \big(\tfrac34-\tfrac18-\tfrac12\big) = \tfrac18 M>0.
  \end{multline*}
  This contradicts~\eqref{eq:m-contradiction} and shows that $t_\delta>0$
  for $\delta \leq\frac{M}{16 V(x_M)}$.
  Hence, without loss of generality, we may assume $t_*,s_*>0$. We then put $r^{2\sigma} =\epsilon_n^{\hb/2}$ in \eqref{eq:doubling-est} (see \autoref{rem:optimal1}\autoref{rem:optimal-nonsym1}) and get
  \begin{multline*}
   \frac{M}{2CT}\leq \epsilon_n^{\frac{\hb(1-\sigma)-2\sigma}{2\sigma}}
   +\bigg(\frac{|x_{\epsilon_n,\delta}-y_{\epsilon_n,\delta}|^2}{\epsilon_n}\bigg)^{\frac12}
   \epsilon_n^{\frac{\hb(1-2\sigma)}{4\sigma}-\frac12}\\
   +\bigg(\frac{|x_{\epsilon_n,\delta}-y_{\epsilon_n,\delta}|^2}{\epsilon_n}\bigg)^{\frac{\hb}{2}}
   +|x_{\epsilon_n,\delta}-y_{\epsilon_n,\delta}|^\hb+\delta.
  \end{multline*}
  Assumption $\hb\geq\frac{2\sigma}{1-2\sigma}$ is equivalent to
  $\frac{\hb(1-2\sigma)}{4\sigma}-\frac12\geq 0$ (see \autoref{rem:optimal1}\autoref{rem:optimal-symmetric1}).
  By \eqref{eq:x-limit} we have 
  ${\epsilon_n^{-1}}{|x_{\epsilon_n,\delta}-y_{\epsilon_n,\delta}|^2}\to 0$,
  thus the expression on the right-hand side converges to $\delta$ as $\epsilon_n\to0$.
  Since $\delta$ is arbitrary, we obtain $M \leq 0$, which is a contradiction. That completes the proof of \hyperref[thm:comparison_principle]{Part (\emph{i})}. \qed
\end{step} 

\noindent\textit{Proof of \hyperref[cor:fp-dual-existence]{Part {\rm(}ii{\rm)}}.}
 Notice that $u\equiv -\|\phi\|_\infty$ and $v\equiv \|\phi\|_\infty$ are a subsolution and a supersolution of \autoref{eq:fp-dual}, respectively.
  Using \autoref{lem:fp-dual-comparison}\autoref{thm:comparison_principle},
  existence of a (unique) bounded continuous viscosity solution follows by the Perron method
  (cf.~e.g.~the proof of~\cite[Theorem~2.3]{MR2653895} for a similar result). \qed

\begin{remark}\label{rem:optimal1}
   Eventually we want to put $b=F'(\fL u)$.
   In the best case, both $u$ and $F'$ may be Lipschitz and then $\hb=1-2\sigma$.
  Our aim is to obtain the most lenient estimate on $\sigma$ in terms of~$\hb$.
  \npar\label{rem:optimal-nonsym1}
  To this end, we cannot do better than substituting $r^{2\sigma} =\epsilon^{\hb/2}$.
  If $a$ is such that $r =\epsilon^{a}$, then we need to have $\frac\hb2 -2\sigma a\geq0$ and 
  at the same time $(1-2\sigma)a-\frac12\geq0$.
  Combining both inequalities we obtain $\frac\hb{4\sigma}\geq a\geq \frac{1}{2(1-2\sigma)}$
  and we still get $\hb\geq\frac{2\sigma}{1-2\sigma}$.
  When $\hb=1-2\sigma$, this translates to $\sigma\in\big(0,\frac{3-\sqrt{5}}{4}\big]$,
  and $\frac{3-\sqrt{5}}{4}\approx \frac{1}{5}$.
 \npar\label{rem:optimal-symmetric1}
  When $\nu$ is symmetric
  at the origin, we may obtain a better estimate on $\sigma$. 
  In this case $\int _{|z|\leq r} \big((x_*-y_*)\cdot z\big)\,\nu(dz) =0$, 
  which leads to an improved version of \eqref{eq:L_r} and~\eqref{eq:doubling-est}:
  \begin{align*}
   \frac{M}{2CT}\leq 
   \epsilon_n^{-1}r^{2-2\sigma}
   + |x_{{\epsilon_n},\delta}-y_{\epsilon_n,\delta}|^\hb \big(1 + r^{-2\sigma}\big)+\delta.
  \end{align*}
  Under the same scaling $r^{2\sigma} =\epsilon^{\hb/2}$,
  the dominant exponent is then ${\frac{\hb(1-\sigma)-2\sigma}{2\sigma}}$.
  It has to be strictly positive, hence $\hb>\frac{2\sigma}{1-\sigma}$.
  When $\hb=1-2\sigma$, this translates to
  $\sigma\in\big(0,\frac{5-\sqrt{17}}{4}\big)$, and $\frac{5-\sqrt{17}}{4}\approx\frac{2}{9}$.
 \end{remark} 
 \begin{proof}[Proof of \autoref{thm:ext-holder-sol-deg}]
  \begin{step}\label{step:alpha}
  For $\epsilon,\delta>0$, let $\psi_{\epsilon,\delta}$ be given by \eqref{eq:psi-epsilon} and define 
  \begin{align*}
   M^0_{\epsilon,\delta} 
   = \sup_{(x,y)\in\X\times\X} \Big\{ w(0,x)- w(0,y) -\psi_{\epsilon,\delta}(x,y) \Big\}
   \end{align*}
  and
  \begin{align*}
   M_{\epsilon,\delta} 
   = \sup_{t\in\Tb\vphantom{\X}}\sup_{(x,y)\in\X\times\X\vphantom{\Tb}}
     \Big\{ w(t,x)-w(t,y) -\psi_{\epsilon,\delta}(x,y) \Big\},
  \end{align*}
  where $w$ is the viscosity solution of \autoref{eq:fp-dual}. Note $0\leq \big(M_{\epsilon,\delta} - M^0_{\epsilon,\delta}\big)\leq 4\|w\|_\infty$,
  and for~$(t,x,y)\in\Tb\times\X\times\X$ it holds that
  \begin{align}\label{eq:ineq_M_epsilon}
   w(t,x)-w(t,y) \leq M_{\epsilon,\delta} + \psi_{\epsilon,\delta}(x,y).
  \end{align}

   We have $w(0)=\phi\in\Hb{1}$ and without loss of generality
   (since the equation is linear) we may assume $[\phi]_1\leq 1$.
   Then
  \begin{align*}
   w(0,x) -w(0,y) - \psi_{\epsilon,\delta}(x,y)  
   \leq |x-y| - \frac{|x-y|^2}{\epsilon}\leq \frac{\epsilon}{4},
  \end{align*}
  and thus $M^0_{\epsilon,\delta} \leq  \epsilon/4$.
  Take ${\Psi}_{\eta,\epsilon,\delta}$ as in~\eqref{eq:Psi-epsilon} with $u=v=w$  and $a_{\epsilon,\delta} = M_{\epsilon,\delta}- M^0_{\epsilon,\delta}$. 
  The results of Steps \ref{step:max} and \ref{step:holder} in the proof of \autoref{lem:fp-dual-comparison}\autoref{thm:comparison_principle} remain valid.
 
  Let us fix $\epsilon,\delta>0$.
  If $t_{\epsilon,\delta}=0$, then by the definition of $M_{\epsilon,\delta}$, $\Psi_{\eta,\epsilon,\delta}$ and $M^0_{\epsilon,\delta}$
  \begin{align*}
    & \frac{M_{\epsilon,\delta} + M^0_{\epsilon,\delta}}{2}= M_{\epsilon,\delta} - \frac{M_{\epsilon,\delta} - M^0_{\epsilon,\delta}}{2} 
    \leq \lim_{\eta_k\to 0}\Psi_{\eta,\epsilon,\delta}(t_*,x_*,s_*,y_*) \\
    & 
   =w(0,x_{\epsilon,\delta})-w(0,y_{\epsilon,\delta})
    -\psi_{\epsilon,\delta}(x_{\epsilon,\delta},y_{\epsilon,\delta})
   \leq M^0_{ \epsilon,\delta}.
  \end{align*} 
  Again, as $M^0_{\epsilon,\delta} \leq M_{\epsilon,\delta}$ by definition,  we get $M_{\epsilon,\delta} = M^0_{\epsilon,\delta}$.
  Because of~\eqref{eq:ineq_M_epsilon},
  for every~$(t,x,y)\in\Tb\times\X\times\X$ we thus have
  \begin{align}\label{eq:mm-are-equal}
   w(t,x)-w(t,y) 
   \leq M^0_{\epsilon,\delta} + \psi_{\epsilon,\delta}(x,y)
   \leq \frac{\epsilon}{4}+\frac{|x-y|^2}{\epsilon}+\delta\big(V(x)+V(y)\big).
  \end{align}
  
  In turn, if $t_{\epsilon,\delta}>0$, then without loss of generality we may assume that $t_*,s_*>0$.
 By \eqref{eq:doubling-est}  we therefore obtain
  \begin{align}\label{eq:mm-estimate}
   \frac{M_{\epsilon,\delta}-M^0_{\epsilon,\delta}}{2CT} 
   \leq \frac{r+|x_{\epsilon,\delta}-y_{\epsilon,\delta}|}{\epsilon}r^{1-2\sigma}
   + |x_{\epsilon,\delta}-y_{\epsilon,\delta}|^{\hb} \big(1 + r^{-2\sigma}\big) + \delta.
  \end{align}
  We also have ${|x_{\epsilon,\delta}-y_{\epsilon,\delta}|^2}\leq 2\epsilon\|w\|_\infty$,
  thanks to~\eqref{eq:square-estimate}.
  Thus, by combining~\eqref{eq:ineq_M_epsilon}, ~\eqref{eq:mm-estimate} and that $M^0_{\epsilon,\delta}\leq \epsilon/4$, we get for all $(t,x,y)\in\Tb\times\X\times\X$
  \begin{align*}
   w(t,x)-  & w(t,y)
  \leq  \frac{\epsilon}{4} + \frac{|x-y|^2}{\epsilon} 
   +  \delta\, \Big(V(x) + V(y) + 2CT\Big) \\
   & +2CTr^{-2\sigma}\Big(\frac{r^2}{\epsilon} + \sqrt{\frac{2\|w\|_\infty r^2}{\epsilon}}
   + \|2w\|_\infty^{\hb/2}\epsilon^{\hb/2}(1+r^{2\sigma})\Big).
  \end{align*} 
  Since the right-hand side dominates the right-hand side in~\eqref{eq:mm-are-equal},
  it also holds for every $\epsilon,\delta>0$.

  By taking $\delta \to 0$ for fixed $t,x,y$, and $\epsilon$ we thus get 
  \begin{align}\label{eq:balancing1}
   w(t,x)-w(t,y) \leq \frac{\epsilon}{4} + \frac{|x-y|^2}{\epsilon} 
   + c_w \, r^{-2\sigma}\Big(\frac{r^2}{\epsilon} 
   +\,\sqrt{\frac{r^2}{\epsilon}}+ \,\epsilon^{\hb/2}\Big),
  \end{align}
  where $c_w = 8CT\max\{1,\sqrt{\|w\|_\infty}\}$.
  To balance the second and the third terms in the parenthesis,
  we put  $r^2= \epsilon^ {\hb+1}$ for every $\epsilon\in(0,1)$.
  Since $\frac{(\hb+1)(1-2\sigma)-1}{2}\in(0,1)$ as $\hb\in(\frac{2\sigma}{1-2\sigma},1]$  and $2\sigma\in\big(0,\frac12\big)$,  this gives us by taking $C_1= \max\{ 1, 4c_w\}$
  \begin{align}\label{eq:xyeps}
   \begin{split}
    w(t,x)-w(t,y) 
    \leq C_1\bigg(\frac{|x-y|^2}{\epsilon} 
    + \epsilon^{\frac{(\hb+1)(1-2\sigma)-1}{2}}\bigg).
   \end{split}
  \end{align}
 For $|x-y|<1$, we balance terms again to find up to a constant that $\epsilon=|x-y|^{\bstr_1}$ where $ \bstr_1=\frac{4}{(\hb+1)(1-2\sigma) + 1} = \frac{4}{(\hb+2)(1-2\sigma) + 2\sigma}$.
  Notice that $\bstr_1\in (\frac{4}{3},2)$,
  since $\hb\in (\frac{2\sigma}{1-2\sigma},1]$ and  $2\sigma\in(0,\frac12)$.
  Then from \eqref{eq:xyeps} we obtain
  \begin{align}\label{eq:alpha}
   w(t,x)-w(t,y)  \leq 2 C_1 |x-y|^{2-\bstr_1},
  \end{align}
  which consequently holds for every~$(t,x,y)\in\Tb\times\X\times\X$ such that $|x-y|<1$ (cf.~\autoref{def:holder}).
 \end{step}
 \begin{step}\label{step:bootstrap}
  We ``bootstrap'' the argument of \autoref{step:alpha} to improve the H\"older ex\-po\-nent.
  By combining~\eqref{eq:square-estimate} and~\eqref{eq:alpha},
  after passing to the limit $\eta_k\to0$, we get
  \begin{align*}
   \frac{|x_{\epsilon,\delta}-y_{\epsilon,\delta}|^2}{\epsilon} 
   \leq  w(t_{\epsilon,\delta},x_{\epsilon,\delta})-w(t_{\epsilon,\delta},y_{\epsilon,\delta}) 
   \leq 2C_1 |x_{\epsilon,\delta}-y_{\epsilon,\delta}|^{2-\bstr_1}.
  \end{align*}
  It follows that $|x_{\epsilon,\delta}-y_{\epsilon,\delta}|^{\bstr_1}\leq 2 C_1\, \epsilon$.

  Now we go back to~\eqref{eq:mm-estimate} and follow the subsequent arguments, using the new bound.
  By taking $c_1 = 8CTC_1^{1/\bstr_1}$ (note that $C_1,\bstr_1\geq 1$) we obtain
   \begin{align}\label{eq:balancing2}
   w(t,x)-w(t,y) 
   \leq \frac{\epsilon}{4} + \frac{|x-y|^2}{\epsilon} 
   + c_1 \, r^{-2\sigma}\Big(\frac{r^2}{\epsilon} 
   + \epsilon^{\frac{1}{\bstr_1}-1}r+\epsilon^{\frac{\hb}{\bstr_1}}\Big). 
  \end{align}
    To balance the 
    terms in the parenthesis,
        for every $\epsilon\in (0,1)$ we put $r^{\bstr_1}=\epsilon^{\hb+\bstr_1-1}$
    (see \autoref{rem:optimal2} for an explanation).
    The dominant exponent of $\epsilon$ in~\eqref{eq:balancing2} is then $\frac{\hb-2\sigma(\hb+\bstr_1-1)}{\bstr_1}$ (and belongs to $(0,1)$ because $2\sigma+\frac{2\sigma}{1-2\sigma}
<\hb\leq 1<\bstr_1<2$), thus
  \begin{align*}
   w(t,x)-w(t,y) \leq C_2\bigg(\frac{|x-y|^2}{\epsilon} 
   + \epsilon^{\frac{\hb-2\sigma(\hb+\bstr_1-1)}{\bstr_1}}\bigg),
  \end{align*}
  where $C_2 = \max\{1, 4c_1\}$.
  Choosing $\epsilon=|x-y|^{\bstr_2}$
  for $\bstr_2=\frac{2\bstr_1}{(\hb+\bstr_1)(1-2\sigma) + 2\sigma}$ gives us
  \vspace{-\baselineskip}
  \begin{align}\label{eq:bootstrap}
   w(t,x)-w(t,y)  \leq 2 C_2 |x-y|^{2-\bstr_2}.
  \end{align}
  By repeating this procedure, we obtain  recursive formulas
  \begin{align}\label{eq:recursion}
  \left\{\begin{aligned}
   \bstr_0&=2,& \bstr_{n+1} &= \frac{2\bstr_n}{(\hb+\bstr_n)(1-2\sigma) + 2\sigma},\\
   C_0 &= \max\{1, \|w\|_{\infty}\},& C_{n+1} &= \max\{1, 32\,CTC_n^{1/\bstr_n}\},
   \end{aligned}\right.\qquad\text{for $n\in\N$.}
  \end{align} 
  Notice that $\hb+\bstr_{0}>\frac{2\sigma}{1-2\sigma}+2=\frac{2-2\sigma}{1-2\sigma}$.
  Now, assume  $\hb+\bstr_n>\frac{2-2\sigma}{1-2\sigma}$ for some $n\in\N$.
  Then,
  \begin{align*}
     \hb + \bstr_{n+1} 
      & = \frac{\hb(\hb+\bstr_n)(1-2\sigma) 
     + 2\hb\sigma+2\bstr_n}{(\hb+\bstr_n)(1-2\sigma) + 2\sigma} >\frac{2(\hb+\bstr_n)}{(\hb+\bstr_n)(1-2\sigma) + 2\sigma}
     \\& \ = 
     \frac{2}{(1-2\sigma) + \frac{2\sigma}{\hb+\bstr_n}} > \frac{2-2\sigma}{(1-2\sigma)(1-\sigma) + {(1-2\sigma)\sigma}}
     = \frac{2-2\sigma}{1-2\sigma}.
  \end{align*}
  By the principle of induction, we get $\hb+\bstr_n>\frac{2-2\sigma}{1-2\sigma}>2$ for every $n\in\N$.
  Then,
  \begin{align*}
   \frac{\bstr_{n+1}}{\bstr_n} 
   = \frac{2}{(\hb+\bstr_n)(1-2\sigma) + 2\sigma} <\frac{2}{2-2\sigma + 2\sigma} =1,
  \end{align*}
  i.e.~$\bstr_{n+1}<\bstr_n$.
  This also implies $\bstr_n\in(1,2]$, since  $\bstr_0=2$ and $2-\bstr_n<\hb\leq1$.
   Passing to the limit in \eqref{eq:recursion} we then find that  $\lim\limits_{n\to\infty}\bstr_n = \frac{2-2\sigma}{1-2\sigma}-\hb=\omega_\infty$.
  
  By \eqref{eq:recursion}, notice that $C_n\geq 1$ for every $n\in\N$.
  Moreover, if $32CT\leq 1$ and $C_{n_0}=1$ for some $n_0\in\N$, then $C_n=1$ for every $n\geq n_0$.
  In any other case, $C_{n+1} = 32CT C_n^{1/\bstr_n}$ for every $n\in\N$.
  Then
  \begin{align*}
   C_{n+1} = (32CT)^{\Sigma_n}\,C_0^{\Pi_n},
   \quad\text{where $\Pi_n = \prod_{k=1}^n\frac{1}{\bstr_k}$
   and $\Sigma_n = \Pi_n+\sum_{k=1}^n\frac{\Pi_n}{\Pi_k}$}.
  \end{align*}
  We observe that $\lim\limits\Pi_n=0$ because $\omega_n\geq \omega_\infty>1$ (since $\beta\leq1$ and $\sigma>0$) and  $\lim\limits\Sigma_n\leq \sum_{k=0}^\infty \frac1{\omega_\infty^k}= 1+\frac{1-2\sigma}{1-\hb(1-2\sigma)}<\infty$ since $\beta\leq1$.
  Either way, \mbox{$\lim\limits_{n\to\infty}C_n<\infty$}.
  
  By writing \eqref{eq:bootstrap} for every $n$ and then passing to the limit $n\to\infty$,
  we get $w\in \LC{\hb_0}$, where
  \begin{align*}
   \hb_0 = \lim_{n\to \infty} (2- \bstr_n) = \hb-\frac{2\sigma}{1-2\sigma}.
   \end{align*}
 \end{step}
 \begin{step}
 If the L\'evy measure $\nu$ is symmetric at the origin,
  the arguments in Steps~\ref{step:alpha} and \ref{step:bootstrap} lead to $w\in\LC{\widehat \hb_0}$,
  where \mbox{$\widehat \hb_0=\hb-\tfrac{2\sigma}{1-\sigma}$}
  (cf.~\autoref{rem:optimal1}\autoref{rem:optimal-symmetric1}), which also allows us to consider 
  {$\hb\in\big(\frac{2\sigma}{1-\sigma},1\big]$}.
  \qedhere
 \end{step}
 \end{proof}

 \begin{remark}\label{rem:optimal2}
  Our aim is to obtain the best H\"older regularity.
  The choice of scaling $r^2=\epsilon^{\hb+1}$ in~\eqref{eq:balancing1} is clearly optimal.
  When we repeat this argument in~\eqref{eq:balancing2},
  we want the \emph{lowest} of the three exponents to be the \emph{highest} possible.
  If $r=\epsilon^a$, then the exponents are
  \begin{align*}
   (2-2\sigma)a-1,\quad (1-2\sigma)a+1/\bstr_n-1,\quad -2\sigma a+\hb/\bstr_n,
  \end{align*}
  which are affine functions of $a$.
  The first two are increasing, and the third is decreasing,
  hence the optimal choice is at the intersection of either 1st and 3rd, or 2nd and 3rd lines,
  which corresponds to  
  $a= \max\big\{{\frac{\hb+\bstr_n-1}{\bstr_n}}, \,  {\frac{\hb+\bstr_n}{2\bstr_n}}\big\}$.
  We have $a= {\frac{\hb+\bstr_n-1}{\bstr_n}}$, since $\hb+\bstr_n>2$.
 \end{remark}

 \section*{Acknowledgements}
IC  was supported by the INSPIRE faculty fellowship (IFA22-MA187). ERJ received funding from the Research Council of Norway under Grant Agreement No. 325114 “IMod. Partial differential equations, statistics and data: An interdisciplinary approach to data-based modelling”.
 MK was supported by the Polish NCN grant 2016/23/B/ST1/00434 and Croatian Science Foundation grant IP-2018-01-2449.  
 The main part of the research behind this paper was conducted when  IC and MK were
 fellows of the ERCIM Alain Bensoussan Programme at NTNU.

 \bibliographystyle{siam}
 \let\OLDthebibliography\thebibliography
 \renewcommand\thebibliography[1]{
  \OLDthebibliography{#1}
  \setlength{\itemsep}{0pt plus 0.2ex}
 }
 \bibliography{MFG}
\end{document}